\documentclass[10pt]{amsart}
\usepackage{graphicx}
\usepackage[bf]{caption}
 \setlength{\captionmargin}{25pt}
\usepackage{epstopdf,epsfig}
\usepackage[hidelinks]{hyperref}

\usepackage{amsmath,amsthm,amsfonts,amssymb,latexsym,amscd,enumerate}

\usepackage{palatino}
\usepackage[mathcal]{euler}


\usepackage{xcolor}

\usepackage{xy}
\xyoption{all}

\swapnumbers

\theoremstyle{plain}
\newtheorem*{theorem*}{Theorem} 
\newtheorem*{lemma*}{Lemma}
\newtheorem*{assumption*}{Assumption}

\newtheorem{theorem}[equation]{Theorem} 
\newtheorem{lemma}[equation]{Lemma}

\newtheorem{proposition}[equation]{Proposition}

\theoremstyle{definition}
\newtheorem{definition}[equation]{Definition}
\newtheorem{assumption}[equation]{Assumption}
\newtheorem{example}[equation]{Example}

\newtheorem*{remark*}{Remark}

\theoremstyle{remark}

\numberwithin{equation}{section}

\newcommand{\R}{\mathbb{R}}
\newcommand{\C}{\mathbb{C}}

\newcommand{\HH}{\mathfrak{H}}

\newcommand{\Wr}{\operatorname{Wr}}

\newcommand{\Spec}{\operatorname{Spec}}
\newcommand{\dom}{\operatorname{dom}}

\DeclareMathOperator{\Trace}{Trace}

\begin{document}

\title{On a Spectral Theorem of Weyl}

\author{Nigel Higson and Qijun Tan 
}


\date{}

\maketitle

\begin{abstract}
\noindent  We give a new proof of a theorem of Weyl on the continuous part of the spectrum of Sturm-Liouville operators on the half-line with asymptotically constant coefficients.  Earlier arguments, due to Weyl and Kodaira, depended on particular features of Green's functions for linear ordinary differential operators. Ours uses a  concept of asymptotic containment of $C^*$-algebra representations that has geometric origins. We  apply the concept elsewhere to the Plancherel formula for spherical functions on   reductive groups.
\end{abstract}


\section{Introduction}
\label{sec-introduction}
The purpose of this paper is to present a new approach to an old theorem of Hermann Weyl on the spectral theory of self-adjoint Sturm-Liouville operators on a half-line.  Our aim is to introduce  methods that are more geometric  and more amenable to generalization than the originals. We show elsewhere \cite{Tan19} that the same arguments apply to Harish-Chandra's Plancherel formula for spherical functions (in fact Harish-Chandra was very much inspired by Weyl's work; compare \cite[p.\;38]{Borel01} and \cite{vandenBan08}).

Sturm-Liouville theory is of course concerned with the eigenvalues and eigenfunctions of linear differential operators
\begin{equation}
  \label{eq-SL-operator}
  D = - \frac{d\,\,}{dx} \cdot p(x) \cdot \frac{d\,\,} {dx} + q(x) ,
\end{equation}
initially on a closed interval $[a,b]$.  Assume for simplicity that $p(x)$ and $q(x)$ are smooth, real-valued functions on $[a,b]$, with $p(x)$ positive everywhere.  In examining the nonzero solutions of the eigenvalue problem
\begin{equation}
  \label{eq-eigenvalue-eqn}
  D f _ \lambda = \lambda f_\lambda ,
\end{equation}
it is appropriate to impose suitable self-adjoint boundary conditions.  For the sake of this introduction let us choose the simplest of these, namely
\begin{equation}
  \label{eq-eigenvalue-bdy-conds}
  f_\lambda(a) = 0 = f_\lambda (b).
\end{equation}
The elements of Sturm-Liouville theory can then be summarized as follows:

\begin{theorem}
\label{thm-classical-sturm-liouville}
  The eigenvalues $\lambda$ for the above problem are real numbers, and each has multiplicity one.  The set of all eigenvalues is a discrete subset of $\R$, bounded below, and if $h$ is any smooth function on $[a,b]$, then
  \[
    h(x) = \sum_{\lambda } \frac{\langle f_\lambda , h\rangle }{\langle f_\lambda, f_\lambda\rangle } f_\lambda (x)
  \]
  for $x\in (a,b)$. \end{theorem}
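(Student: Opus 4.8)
The plan is to reduce the spectral problem for the unbounded operator $D$ to the spectral theorem for compact self-adjoint operators, by inverting $D$ after a harmless shift. First I would record the \emph{a priori} lower bound: for smooth $f$ with $f(a)=f(b)=0$, integration by parts gives $\langle Df,f\rangle = \int_a^b p\,|f'|^2 + q\,|f|^2 \ge -\|q\|_\infty\,\langle f,f\rangle$, so $D$ is bounded below on this domain and, being symmetric (again by the same integration by parts, whose boundary terms vanish), admits a self-adjoint realization with the same bound; this already forces every eigenvalue to be real and at least $-\|q\|_\infty$. Fix a constant $c > \|q\|_\infty$. Then $D+c$ is invertible, and its inverse is the integral operator with kernel the classical Green's function $G(x,y)$, built in the usual way from solutions of $(D+c)f=0$ satisfying the two boundary conditions separately. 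Since $G$ is continuous on $[a,b]\times[a,b]$, the resolvent $(D+c)^{-1}$ is Hilbert--Schmidt, hence compact; it is self-adjoint and has trivial kernel.

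Now apply the spectral theorem for compact self-adjoint operators: $L^2[a,b]$ has an orthonormal basis of eigenfunctions $\hat f_\lambda = f_\lambda/\|f_\lambda\|$ of $(D+c)^{-1}$ with eigenvalues tending to $0$. Applying $D+c$ shows each $\hat f_\lambda$ is an eigenfunction of $D$, and conversely, so the eigenvalues of $D$ are exactly the numbers $\lambda$, which under $\lambda\mapsto(\lambda+c)^{-1}$ become a sequence with $0$ its only possible accumulation point --- i.e.\ a discrete subset of $\R$, bounded below. Multiplicity one is a separate, purely ODE-theoretic point: the solution space of $Df_\lambda=\lambda f_\lambda$ is two-dimensional, and a solution with $f_\lambda(a)=0$ is determined up to a scalar by $f_\lambda'(a)$ (if both vanish then $f_\lambda\equiv 0$ by uniqueness for linear ODEs), so the $\lambda$-eigenspace --- which in addition requires $f_\lambda(b)=0$ --- is at most one-dimensional, hence exactly one-dimensional since $\lambda$ is an eigenvalue.

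Completeness of $\{\hat f_\lambda\}$ gives $h = \sum_\lambda \langle \hat f_\lambda, h\rangle\,\hat f_\lambda = \sum_\lambda \frac{\langle f_\lambda, h\rangle}{\langle f_\lambda, f_\lambda\rangle} f_\lambda$ converging in $L^2[a,b]$. The real work is to upgrade this to pointwise convergence on the open interval $(a,b)$. When $h$ is smooth \emph{and} vanishes at $a$ and $b$, so that $h\in\operatorname{dom}(D)$, I would write $h=(D+c)^{-1}g$ with $g=(D+c)h$ continuous; since $(D+c)^{-1}$ commutes with the spectral partial-sum projections, the $N$-th partial sum of the expansion of $h$ is $(D+c)^{-1}$ applied to that of $g$, so $h(x)-S_Nh(x) = \int_a^b G(x,y)\,\bigl(g-S_Ng\bigr)(y)\,dy$, which by Cauchy--Schwarz is at most $\|G(x,\cdot)\|_{L^2}\,\|g-S_Ng\|_{L^2}$; as $\|G(x,\cdot)\|_{L^2}$ is bounded on $[a,b]$ and $S_Ng\to g$ in $L^2$, the expansion of such an $h$ converges uniformly on all of $[a,b]$. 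For a general smooth $h$ and a point $x_0\in(a,b)$ I would split $h=\psi h+(1-\psi)h$ with $\psi$ a smooth cutoff, $\psi\equiv 1$ near $x_0$ and $\supp\psi\subset(a,b)$: the first piece falls under the previous case and its expansion converges to $h(x_0)$ at $x_0$, so everything reduces to showing that the expansion of the smooth function $(1-\psi)h$, which vanishes identically near $x_0$, converges to $0$ at $x_0$.

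That localization statement is the main obstacle, and it is a genuine analytic input rather than a formal manipulation --- it is the Sturm--Liouville analogue of Riemann's localization principle, and cannot be strengthened to uniform or absolute convergence precisely because every $\hat f_\lambda$ vanishes at $a$ and $b$. I would prove it by comparing $D$ near each endpoint with its constant-coefficient model via Liouville--Green (WKB) asymptotics, which give $\hat f_\lambda(x)=\sqrt{2/(b-a)}\,\sin\!\bigl(\sqrt{\lambda}\,(x-a)+o(1)\bigr)+O(\lambda^{-1/2})$ together with $\lambda_n\sim\bigl(n\pi/(b-a)\bigr)^2$ and $|\hat f_{\lambda_n}'(a)|,\,|\hat f_{\lambda_n}'(b)|=O(n)$; after one integration by parts against the eigenvalue equation, the coefficients $\langle\hat f_\lambda,(1-\psi)h\rangle$ acquire a main term of size $O(n^{-1})$ coming from the boundary values of $(1-\psi)h$ plus a rapidly decaying remainder, and are close enough to the Fourier sine coefficients of $(1-\psi)h$ that Dirichlet's classical pointwise convergence theorem applies at $x_0$, where the function is zero. (Alternatively one can prove the localization principle directly from off-diagonal bounds on the spectral kernel $\sum_{\lambda\le\Lambda}\hat f_\lambda(x)\overline{\hat f_\lambda(y)}$, a pointwise Weyl law; either way this quantitative comparison with the flat model is where the difficulty concentrates, and it is exactly the step the present paper seeks to replace by something more geometric.)
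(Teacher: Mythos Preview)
The paper does not prove this theorem. It is stated in the introduction purely as classical background, with a footnote directing the reader to Dunford--Schwartz, Chapter~XIII, Section~4, Theorem~3, for ``a precise formulation of the theorem and a thorough account of its proof.'' The paper's own contribution concerns Theorem~\ref{thm-weyl} (Weyl's half-line result) and its reformulation as Theorem~\ref{thm-kodaira}, not this bounded-interval statement.

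Your argument is the standard route and is essentially correct---and is, in outline, what one finds in the reference the paper cites: shift so that $D+c$ is strictly positive, invert via the classical Green's function to obtain a Hilbert--Schmidt (hence compact) self-adjoint operator, apply the spectral theorem for compact operators, and read off multiplicity one from ODE uniqueness at the left endpoint. Your upgrade from $L^2$ to pointwise convergence---uniform convergence for $h\in\operatorname{dom}(D)$ via $h-S_Nh=(D+c)^{-1}(g-S_Ng)$ and Cauchy--Schwarz against $G(x,\cdot)$, then a cutoff plus a localization principle for general smooth $h$---is also the classical strategy. The WKB/equiconvergence step is only sketched, but the outline is sound.

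One correction worth flagging: your closing parenthetical says the localization step ``is exactly the step the present paper seeks to replace by something more geometric.'' That misreads the paper. The geometric averaging method of Section~\ref{sec-asymp-related-reps} and its application in Section~\ref{sec-SL} target the \emph{continuous} spectrum of $D$ on the half-line $[0,\infty)$; what it replaces is Kodaira's contour-integral computation of the resolvent kernel reviewed in Section~\ref{sec-kodaira}. The bounded-interval expansion theorem you are proving here is not touched by the paper at all---it is simply quoted and referenced.
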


In an influential paper from early in his career, Weyl developed an analogous theory for Sturm-Liouville operators on a half-line rather than a bounded interval   \cite{Weyl10}. Weyl's paper addresses many issues, but our concern here is his treatment of the continuous spectrum of certain  Sturm-Liouville operators, and especially his version, for the continuous spectrum, of the expansion theorem above.

Assume that the coefficient functions $p(x)$ and $q(x)$ in \eqref{eq-SL-operator} are defined on $[0,\infty)$ and converge sufficiently rapidly to the constants $1$ and $0$, respectively, as $x$ tends to infinity.  For the purpose of this introduction, let us assume more than Weyl, namely that
\begin{equation}
  \label{eq-simple-asymptotic-condition}
  p(x) \equiv 1 \quad\text{and} \quad  q(x) \equiv 0 \qquad \text{if $x \gg 0$}
\end{equation}
(this assumption is too strong to be interesting in applications, but it allows us to quickly introduce Weyl's result). For each $\lambda\in \C$ there is a one-dimensional space of eigenfunctions
$F_\lambda$ for $D$ that satisfy the boundary condition
\begin{equation}
  \label{eq-SL-boundary-cond2}
  F_\lambda (0) =0.
\end{equation}
If we focus on the case where $\lambda >0$, and if we choose, as we may, $F_\lambda$ to be nonzero and real-valued, then our assumptions on the coefficient functions $p(x)$ and $q(x)$ imply that
\begin{equation}
  \label{eq-c-fn-def1}
  F_\lambda (x) = c(\lambda)e^{i\sqrt{\lambda}\, x} + \overline{c(\lambda}) e ^{-i\sqrt{\lambda}\, x} 
\end{equation}
for some nonzero $c(\lambda)\in \C$ and all $x\gg 0$.  Weyl's result for the continuous spectrum, expressed in $L^2$-terms, is as follows (there is also a pointwise result that is analogous to Theorem~\ref{thm-classical-sturm-liouville}, which may be   derived from the $L^2$-result):

\begin{theorem}
  \label{thm-weyl}
  If $h$ and $g$ are   smooth and compactly supported functions on $(0,\infty)$, then
 \[
 \langle f,g\rangle   = \sum_{\lambda < 0} \frac{\langle g, F_\lambda \rangle\langle F_\lambda , h\rangle}{\langle F_\lambda, F_\lambda\rangle} + \frac{1}{4 \pi} \int _0^\infty \frac{\langle g, F_\lambda \rangle\langle F_\lambda , h\rangle}{\,\, | c(\lambda) |^2}\, \frac{d\lambda} {\sqrt{\lambda}} ,
  \]
  The  sum is over the \emph{square-integrable} eigenfunctions associated to negative eigenvalues that satisfy the boundary condition \eqref{eq-SL-boundary-cond2}, and there are finitely many of these.  The integral   is absolutely convergent.  All the inner products in the formula have the standard $L^2$-form.
\end{theorem}

We shall approach Weyl's theorem by comparing the Sturm-Liouville operator $D$ to the simpler operator
\[
  D_0 = - \frac{d^2 \phantom{i}}{dx^2} .
\]
But we shall regard $D_0$ as an operator on the full line $(-\infty,\infty)$, rather than the half-line, and the translation-invariance of $D_0$ on the full line will be crucial.  To explain why,  it is helpful to make the following general definition:

\begin{definition}
  \label{def-asymptotic-containment}
  Let $A$ be a $C^*$-algebra and let
  \[
    \pi\colon A \longrightarrow B(H) \quad \text{and} \quad \pi_0 \colon A \longrightarrow B(H_0)
  \]
  be nondegenerate Hilbert space representations of $A$.  We shall say that $\pi_0$ is \emph{asymptotically contained} in $\pi$ if
  \begin{enumerate}[\rm (i)]
    \item There is a one-parameter group  of unitary operators   $U_t\colon H_0{\to} H_0$ that commute with the operators in $\pi_0[A]$. 
    \item There is a bounded operator $ W \colon H_{0} {\to} H $ such that for every $a \in A$, and for every $u,v \in H_0$,
    \begin{equation}
      \label{eq-asymptotic relation-intro}
      \lim_{t\to +\infty} 
      \left  [ 
        \bigl\langle   u, \pi_0(a)   v \bigr \rangle_{H_0} 
        -
        \bigl\langle  W U_t u,    \pi (a)W U_t v  \bigr \rangle _H  
      \right  ]  = 0  .
    \end{equation}
  \end{enumerate}
\end{definition}
Note that asymptotic containment of representations implies weak containment of representations \cite[Definition 3.4.5]{Dixmier77}.

For Weyl's theorem, we  take $A {=} C_0(\mathbb{R})$, and we  define $\pi$ and $\pi_0$ to be the functional calculus representations
\[
  \pi \colon \varphi \longmapsto \varphi (D) \quad \text{and} \quad \pi_0\colon \varphi \longmapsto \varphi (D_0),
\]
on $H{=}L^2 (0,\infty)$ and $H_0{=}L^2 (-\infty, \infty)$ respectively. We define $\{ U_t\}$ to be the standard one-parameter unitary group of translations on $L^2 (-\infty , \infty)$ and take
\[
  W \colon L^2 (-\infty, \infty) \longrightarrow L^2(0,\infty)
\]
to be the obvious restriction operator. The asymptotic containment of $\pi_0$ in $\pi$ follows from the condition \eqref{eq-simple-asymptotic-condition} on the coefficients of $D$; see Lemma~\ref{lem-asymp-inclusion}.

Returning to the general case,   we shall assume that the  $C^*$-algebra $A$ is separable and \emph{commutative}, as it is in the examples of concern to us, and that the Hilbert spaces $H$ and $H_0$ are separable, too.  Then the  representations $\pi$ and $\pi_0$ may be decomposed into direct integrals
\begin{equation}
  \label{eq-two-direct-integrals}
  H = \int^\oplus H_\lambda \, d\mu(\lambda)
  \quad\text{and} \quad
  H_0 = \int ^\oplus H_{0,\lambda} \, d \mu_0(\lambda)  
\end{equation}
over the spectrum of $A$, so that the action of $a{\in} A$ on each space in either integral is through the character $a \mapsto \lambda(a)  $.  We shall assume that the spaces $H_\lambda $ and $H_{0,\lambda}$ are \emph{finite-dimensional}, as again is the case in the examples of concern to us.

Now  let us assume temporarily that the asymptotic containment relation \eqref{eq-asymptotic relation-intro} is replaced by the \emph{exact} containment relation
\begin{equation}
  \label{eq-exact-relation}
  \bigl\langle   u, \pi_0(a)   v \bigr \rangle_{H_0} 
  =
  \bigl\langle  W   u,    \pi (a)W   v  \bigr \rangle _H  
\end{equation}
for all $u, v \in H_0$ and all $a\in A$, so that the operator $W$ is necessarily an isometric inclusion of $\pi_0$ as a subrepresentation of $\pi$. It follows  that the operator $W$ decomposes into a field of operators
\begin{equation}
  \label{eq-w-lambda}
  W_\lambda \colon H_{0,\lambda } \longrightarrow H_{\lambda} .
\end{equation}
and that each $W_\lambda^* W_\lambda$ is a multiple of the identity operator on $H_{0,\lambda}$. Of course that multiple is $  {\Trace ( W^*_\lambda W_\lambda ) }  /{\dim (H_{0,\lambda})}$.

Using the family $\{ W_\lambda \}$,   the   direct integral decomposition of $\pi$  in \eqref{eq-two-direct-integrals} gives rise to  an alternative direct integral decomposition of $\pi_0$.  Comparing the two, we find   that the measure $ \mu_0$ in \eqref{eq-two-direct-integrals} is absolutely continuous with respect to $\mu$,  and that indeed \begin{equation}
  \label{eq-radon-nikodym-fmla}
  \frac{d \mu_{0} } { d \mu} (\lambda) = \frac{\Trace ( W_\lambda^* W_\lambda ) } {\dim (H_{0,\lambda})}
\end{equation}
for $\mu$-almost all $\lambda$ in the support of the representation $\pi_0$.

The main observation of this paper, which, aside from some functional-analytic details, is very simple,  is that even when $\pi_0$ is only asymptotically contained in $\pi$, we can \emph{still} derive a version of  the formula \eqref{eq-radon-nikodym-fmla} in essentially the same way, if we  \emph{assume} the existence of operators $W_\lambda \colon H_{0,\lambda } {\to} H_{\lambda}$ that \emph{asymptotically} decompose $W$ in the sense that
\begin{equation}
  \label{eq-w-lambda-asymptotics}
  \lim_{t\to +\infty} 
  W_\lambda (U_t v)_{0,\lambda}
  -  (W U_t v)_\lambda  
  = 0  .
\end{equation}
See Section~\ref{sec-asymp-related-reps} for a precise account of the assumptions we make.  

As for  \eqref{eq-w-lambda-asymptotics}, it is easiest to understand its meaning   by examining the adjoint operators
\begin{equation*}
  C_\lambda = W_\lambda^*  \colon H_{\lambda } \longrightarrow H_{0,\lambda}  .
\end{equation*}
In the context of the Sturm-Liouville problem, the spaces $H_\lambda $ and $H_{0,\lambda}$ may be understood as   $\lambda$-eigenspaces for the operators $D$ and $D_0$, respectively, and the asymptotic formula \eqref{eq-w-lambda-asymptotics} simply asserts that each eigenfunction of $D$ is mapped by $C_\lambda$ to an eigenfunction of $D_0$ to which it is asymptotic in the sense of \eqref{eq-c-fn-def1}.  This proves the existence of the operators $C_\lambda$ in this context, and also computes the  trace in \eqref{eq-radon-nikodym-fmla} in terms of $|c(\lambda)|^2$.  Weyl's formula in Theorem~\ref{thm-weyl} is an immediate consequence.\footnote{To be accurate, the Radon-Nikodym derivative formula only determines $\mu$ on the positive part of the spectrum because the necessary assumptions on $W_\lambda$ only hold there. A separate argument is required for the nonpositive spectrum; see Section \ref{sec-non-positive}.}

Other interesting examples of asymptotic containment of representations come from representation theory.  In brief, if $G$ is a real reductive group with maximal compact subgroup $K$, and if $P=M_\mathfrak{p}A_\mathfrak{p}N_\mathfrak{p}$ is a minimal parabolic subgroup, then the representation of $C^*(G)$ on $L^2 (G/M_\mathfrak{p}N_\mathfrak{p})$ is asymptotically contained in the representation of $C^*(G)$ on $L^2 (G/K)$.  The case of $SL(2,\R)$ is illustrated in Figure~\ref{fig-orbit}.  The figure should make it clear that the   asymptotic containment in this example has a very geometric origin.
\begin{figure}[ht]
  \centering \includegraphics[scale=0.3]{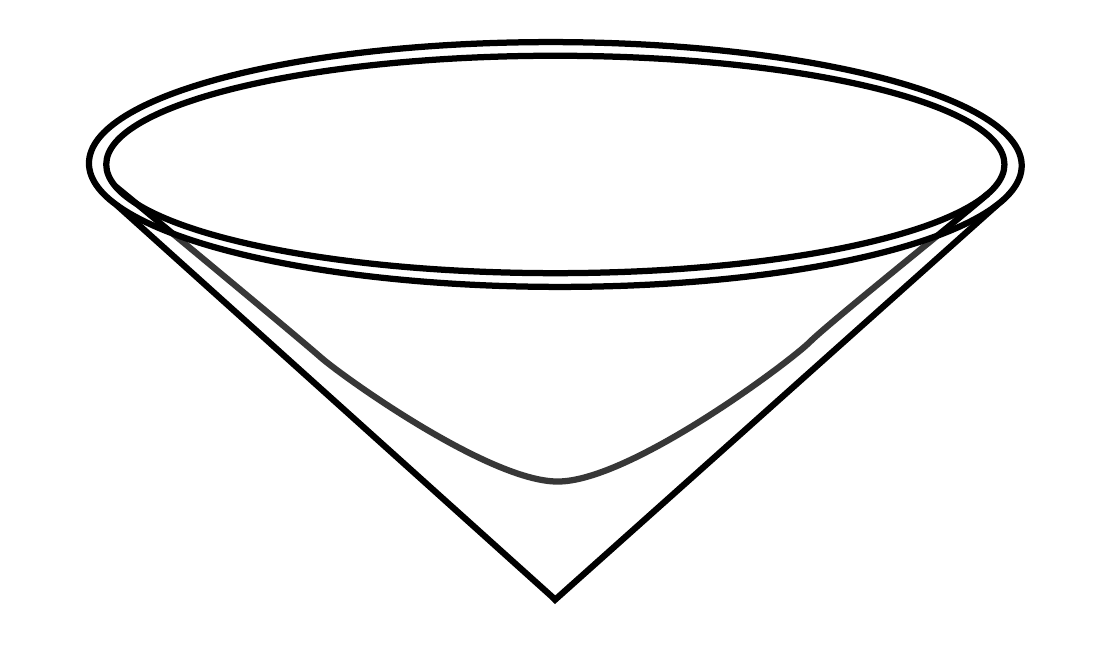}
  \caption{The homogeneous spaces $G/K$ and $G/M_\mathfrak{p}N_\mathfrak{p}$ for the reductive group $G=SL(2,\R)$, realized as coadjoint orbits.   }
  \label{fig-orbit}
\end{figure}

To fit this example within the framework of this paper we take the $C^*$-algebra  $A$ in our asymptotic containment  to be the commutative $C^*$-subalgebra of $C^*(G)$ that is generated by the $K$-bi-invariant compactly supported smooth functions on $G$.   It is naturally represented on the Hilbert spaces  $H$ and $H_0$ of  $K$-fixed vectors within $L^2 (G/K)$ and $L^2 (G/M_\mathfrak{p}N_\mathfrak{p})$, respectively.   The $K$-invariant functions on $G/K$ and $G/M_\mathfrak{p}N_\mathfrak{p}$ can be identified with functions on $A_\mathfrak{p}^{+}$ and $A_\mathfrak{p}$ respectively, where $A_\mathfrak{p}^{+}$ is a dominant chamber in $A_\mathfrak{p}$.  A suitable operator $W$ may be defined using restriction of functions from $A_\mathfrak{p}$ to $A_\mathfrak{p}^+$ (followed by a translation   away from the walls of   $A_\mathfrak{p}^{+}$ to make $W$ bounded).  The minimal parabolic is defined by a one-parameter subgroup of $A$, and right-translation on  $G/M_\mathfrak{p}N_\mathfrak{p}$ by this one-parameter group gives the necessary one-parameter unitary group on $H_0$.
The counterpart of Weyl's theorem in this example is Harish-Chandra's Planch\-erel theorem for spherical functions. See \cite{Tan19}.

In fact the present paper grew out of a project in noncommutative geometry   involving the Plancherel formula \cite{CCH16,CH16}.
The reader is referred to \cite{vandenBan08} for an interesting and thorough discussion of the relation between Weyl's theorem and harmonic analysis on symmetric spaces.   

After this paper was written the authors were lucky to enjoy a very stimulating conversation with Joseph Bernstein, who pointed out that he had obtained very similar results in unpublished work from the 1980's (see the final remarks in \cite[Sec.\;0.2]{Bernstein88} for hints of this). 
Some of the spectral-theoretic methods from \cite{SV12}, which studies harmonic analysis on $p$-adic spherical varieties, are also very closely related to the methods of this paper.  See especially Section 8 of \cite{SV12}. 


Here is a brief outline of the present paper. We shall discuss asymptotic containment of representations in Section~\ref{sec-asymp-related-reps}. The main result is \ref{thm-mu-in-terms-of-mu-zero}.  We shall apply our method to the positive, continuous spectrum of Sturm-Liouville operators in Section~\ref{sec-SL}, and we shall briefly address the nonpositive  spectrum   in Section~\ref{sec-non-positive}.  We shall consider not only the operators discussed in this introduction, but also a nontrivial example related to the representation theory of $SL(2,\R)$. In an appendix we quickly review the Weyl-Kodaira approach to Theorem~\ref{thm-weyl} for the sake of comparison.

\section{Asymptotic Containment of Representations}

\label{sec-asymp-related-reps}

In this section we shall describe  our  approach to Weyl's theorem.  We shall formulate the method in fairly general terms, applicable to examples beyond Weyl's theorem, although we shall not strive for the upmost generality in the assumptions that we make.  

Let $A$ be a separable, commutative $C^*$-algebra with Gelfand spectrum $\Lambda$, so that of course
$
  A \cong C_0(\Lambda)
$ for some locally compact space $\Lambda$.
We shall view elements of $A$ as continuous functions on $\Lambda$ without further comment.

Let us suppose that we are given two non-degenerate representations of $A$ on separable Hilbert spaces:
\[
  \pi \colon A \longrightarrow B(H) \quad \text{and}\quad \pi_0\colon A \longrightarrow B(H_0) .
\]
We shall assume that $\pi_0$ is asymptotically contained in $\pi$, as in Definition~\ref{def-asymptotic-containment}, with unitary group $\{ U_t\}$ on $H_0$ and operator $W \colon H_0 \to H$ as described in the definition.  Our analysis of the relation between $\pi$ and $\pi_0$ will be based on following formula, which is an immediate consequence of \eqref{eq-asymptotic relation-intro}.

\begin{lemma}
  \label{lemma-asymptotic-formula}
  If $a \in A$ and if $g,h\in H_0$, then
  \[
    \pushQED{\qed} \langle g, \pi_0(a )h \rangle_{H_0} = \lim_{T\to +\infty} \frac 1 T \int _0^T \langle WU_{t} g, \pi(a) WU_t h\rangle_{H} \, dt . \qedhere \popQED
  \]
\end{lemma}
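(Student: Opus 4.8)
The plan is to deduce Lemma~\ref{lemma-asymptotic-formula} from Assumption~(B) by the most direct route: write $\langle W_t g, \pi(\varphi) W_t h\rangle_H$ as $\langle U_t g, \pi_0(\varphi) U_t h\rangle_{H_0}$ plus an error term $\eta(t)$ that tends to $0$ as $t\to+\infty$, observe that the main term is in fact independent of $t$, and then take the Ces\`aro average. First I would fix $\varphi\in C$ and $g,h\in H$ (note that the statement concerns vectors in $H$, but since $W\colon H_0\to H$ the formula as literally written would need $g,h\in H_0$; I would read the intended hypothesis as $g,h\in H_0$, matching \eqref{eq-asymptotic relation2}, and state it that way). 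Define
\[
  \eta(t) = \bigl\langle W_t g, \pi(\varphi) W_t h\bigr\rangle_H - \bigl\langle U_t g, \pi_0(\varphi) U_t h\bigr\rangle_{H_0}.
\]
By \eqref{eq-asymptotic relation2}, $\eta(t)\to 0$ as $t\to+\infty$.

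Next I would simplify the second term. Since each $U_t$ is unitary and commutes with every operator $\pi_0(\varphi)$, we have
\[
  \bigl\langle U_t g, \pi_0(\varphi) U_t h\bigr\rangle_{H_0}
  = \bigl\langle U_t g, U_t \pi_0(\varphi) h\bigr\rangle_{H_0}
  = \bigl\langle g, \pi_0(\varphi) h\bigr\rangle_{H_0},
\]
a quantity not depending on $t$. Therefore $\langle W_t g, \pi(\varphi) W_t h\rangle_H = \langle g, \pi_0(\varphi) h\rangle_{H_0} + \eta(t)$ for all $t$, and it remains only to check that
\[
  \lim_{T\to+\infty}\frac 1 T\int_0^T \Bigl(\langle g, \pi_0(\varphi) h\rangle_{H_0} + \eta(t)\Bigr)\,dt
  = \langle g, \pi_0(\varphi) h\rangle_{H_0}.
\]
The constant term averages to itself, so this reduces to the elementary fact that if $\eta\colon [0,\infty)\to\C$ is (Borel measurable and locally integrable, which holds here since $t\mapsto W_t$ is strongly continuous and hence $\eta$ is continuous) and $\eta(t)\to 0$ as $t\to+\infty$, then $\frac1T\int_0^T\eta(t)\,dt\to 0$; this is a one-line $\varepsilon/2$ argument splitting the integral at a point beyond which $|\eta|<\varepsilon/2$.

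There is essentially no obstacle here — the lemma is a packaging of Assumption~(B) together with the $U_t$-invariance already noted in the sentence preceding the statement. The only point that needs a moment's care is measurability/local integrability of $t\mapsto\eta(t)$, which follows from strong continuity of the unitary group $U_t$, boundedness of $W$, and continuity of the inner product and of $\pi,\pi_0$; and the (harmless) reconciliation of the hypothesis ``$f,g\in H$'' in the statement with the hypothesis ``$g,h\in H_0$'' needed to form $W_t g$ and $W_t h$.
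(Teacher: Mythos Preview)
Your proof is correct and is exactly the argument the paper has in mind: the sentence preceding the lemma already notes that the $U_t$ drop out of $\langle U_t g,\pi_0(\varphi)U_t h\rangle_{H_0}$, and the lemma is then stated with an immediate \qed. Your observation that the hypothesis should read $g,h\in H_0$ rather than $f,g\in H$ is also correct.
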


We shall make the following assumptions concerning the representation $\pi_0$; in the case of Sturm-Liouville operator they will be easy to verify using Fourier analysis.

\begin{assumption}
  \label{assumptionA}
  We shall suppose that we are given:
  \begin{enumerate}[\rm (i)]
    \item An open subset $ \Lambda_0\subseteq \Lambda $ and a locally trivial continuous field of finite-dimen\-sional Hilbert spaces $\{ H_{0,\lambda} \}_{\lambda\in \Lambda_0} $ over $\Lambda _0$ (or in other words a Hermitian vector bundle).

    \item A dense subspace $ \HH_0\subseteq H_0 $ and a linear map $h\mapsto \{ h_{0,\lambda}\}$ from $\HH_0$ into the continuous sections of $\{ H_{0,\lambda}\}$ such that
    \[
      H_{0,\lambda} = \{ \, h_{0,\lambda}\, : \, h\in \HH_0\,\}
    \]
    for every $\lambda \in \Lambda _0$.
    \item A Borel measure $\mu_0$ on $\Lambda_0$ such that $ \langle h_{0,\lambda} , g_{0,\lambda} \rangle$ is a $\mu_0$-integrable function of $\lambda $, for every $h,g\in \HH_0$, and such that
    \[
      \bigl\langle h, \pi_0(a ) g \bigr\rangle_{H_0} = \int _{\Lambda _0} \bigl \langle h_{0,\lambda} , g_{0,\lambda} \bigr \rangle _{H_{0,\lambda}} a (\lambda) \, d\mu_0(\lambda)
    \]
    for every $h,g\in \HH_0$ and every $a\in A$.
  \end{enumerate}
\end{assumption}

\begin{assumption}
  \label{assumptionB}
  We shall assume that the action of the one-parameter unitary group $\{U_t\}$ on the Hilbert space $H_0$ maps the subspace $\HH_0$ into itself, and that the continuous field $\{ H_{0,\lambda}\} _{\lambda \in \Lambda_0}$ carries a continuous, unitary action $\{ U_{t,\lambda}\} $ of $\R$ such that
  \[
    (U_th) _{0,\lambda} = U_{t,\lambda} h_{0,\lambda}
  \]
  for every $h\in \HH_0$ and every $\lambda\in \Lambda_0$.
\end{assumption}

Next, we shall make assumptions on the representation $\pi$ that are similar to those in Assumption~\ref{assumptionA}, except that we shall in addition assume that $\pi$ has \emph{multiplicity one}: the fibers in the field of Hilbert spaces that decomposes $\pi$ have dimension one.  This assumption isn't altogether necessary (finite-dimen\-sionality would suffice), but it simplifies the statements of the results that follow, along with their proofs, and it is satisfied in the situations of interest to us.  Here are the details.

\begin{assumption}
  \label{assumptionC}
  We shall suppose that there are given:
  \begin{enumerate}[\rm (i)]
    \item A locally trivial continuous field of \emph{one}-dimensional Hilbert spaces $\{ H_{\lambda} \}_{\lambda\in \Lambda} $ over $\Lambda$ (that is, a Hermitian line bundle over $\Lambda$).
    \item A dense subspace $ \HH \subseteq H $ such that if $h\in \HH_0$ then $W U_t h\in \HH$ for all $t\gg 0$.
    \item A family of linear maps $\varepsilon_\lambda\colon h\mapsto  h_{\lambda}$ from $\HH$ into   $ H_{\lambda}$ so that $\{h_\lambda\}$ is a continuous section,  and a Borel measure on $\Lambda$ such that $ \langle h_{\lambda} , g_{\lambda} \rangle$ is a $\mu$-integrable function of $\lambda $, for every $h,g\in \HH$, and such that
    \[
      \bigl\langle h, \pi(a ) g \bigr \rangle_{H} = \int _{\Lambda } \bigl \langle h_{\lambda} , g_{\lambda} \bigr \rangle _{H_{\lambda}} a (\lambda) \, d\mu(\lambda)
    \]
    for every $h,g\in \HH$ and every $a\in A$.
  \end{enumerate}
\end{assumption}

Finally, we shall make the following assumption concerning the asymptotic relation between the fields $\{ H_\lambda\}$ and $\{ H_{0,\lambda} \}$. As we noted in the introduction, and as we shall see clearly in the next section, in the Sturm-Liouville context this means that the operator $C_\lambda$ below maps each $\lambda$-eigen\-function for $D$ to a $\lambda$-eigenfunction for $D_0$ to which it is asymptotic.

\begin{assumption}
  \label{assumptionD}
  We shall assume that there is given a continuous family of \emph{injective} linear maps
  \[
    C_\lambda \colon H_\lambda \longrightarrow H_{0,\lambda}\qquad (\lambda \in \Lambda _0)
  \]
  with the property that if $h$ belongs to $\HH_0$, and if $\{v_\lambda\} $ is a continuous section of $\{ H_\lambda \}$, and $K$ is a compact subset of $\Lambda _0$, then
  \begin{equation*}
    \lim_{t\to +\infty}  \sup_{\lambda \in K} \left | \bigl \langle C_\lambda v_\lambda,  (U_th) _{0,\lambda}\bigr \rangle_{H_{0,\lambda}} - \bigl \langle v_\lambda ,(W U_t h)_{\lambda} \bigr \rangle_{H_\lambda} \right | = 0 .
  \end{equation*}
\end{assumption}

Using the four assumptions listed above we shall prove:
\begin{theorem}
  \label{thm-mu-zero-in-terms-of-mu}
  The measure $\mu_0$ is absolutely continuous with respect to $\mu$ on the open set $\Lambda_0$, with Radon-Nikodym derivative
  \[
    \frac{d \mu_0 }{d \mu} (\lambda)= \frac{\operatorname{Trace}(C_\lambda ^*C_\lambda )}{\dim (H_{0,\lambda})} .
  \]
\end{theorem}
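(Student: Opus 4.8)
The plan is to feed Lemma~\ref{lemma-asymptotic-formula} test functions $\varphi\in C$ with $\supp\varphi\subseteq\Lambda_0$ and vectors $g,h$ in the dense subspace $\HH_0\subseteq H_0$, and to evaluate each side of the resulting identity using the two direct-integral pictures available to us. By assumption (C) the left-hand side equals $\int_{\Lambda_0}\varphi(\lambda)\,\langle\varepsilon_{0,\lambda}(g),\varepsilon_{0,\lambda}(h)\rangle_{H_{0,\lambda}}\,d\mu_0(\lambda)$, while for each $t$ the decomposition \eqref{eq-direct-integral} for $\pi$ gives
\[
\langle W_tg,\pi(\varphi)W_th\rangle_{H}=\int_{\Lambda_0}\varphi(\lambda)\,\langle (W_tg)_\lambda,(W_th)_\lambda\rangle_{H_\lambda}\,d\mu(\lambda).
\]
So, provided one may interchange the time average with the integral over $\Lambda_0$, the theorem reduces to identifying, for $\mu$-almost every $\lambda\in\Lambda_0$, the limit
\[
\Phi_\lambda(g,h):=\lim_{T\to+\infty}\frac1T\int_0^T\langle (W_tg)_\lambda,(W_th)_\lambda\rangle_{H_\lambda}\,dt .
\]

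To compute $\Phi_\lambda$, use the multiplicity-one assumption (A) to fix a measurable section $\lambda\mapsto e_\lambda$ with $\|e_\lambda\|=1$ wherever $H_\lambda\neq 0$ (and $e_\lambda=0$ otherwise), so that $\langle (W_tg)_\lambda,(W_th)_\lambda\rangle_{H_\lambda}=\overline{\langle e_\lambda,(W_tg)_\lambda\rangle}\,\langle e_\lambda,(W_th)_\lambda\rangle$. Assumption (E), combined with (D), replaces each factor $\langle e_\lambda,(W_th)_\lambda\rangle$ by $\langle A_\lambda e_\lambda,U_t\varepsilon_{0,\lambda}(h)\rangle_{H_{0,\lambda}}$ at the cost of an error that is $O(\|A_\lambda e_\lambda\|)$ uniformly for $\lambda$ in a compact set and tends to $0$ as $t\to+\infty$; since the replacement term is itself bounded in $t$ by $\|A_\lambda e_\lambda\|\,\|\varepsilon_{0,\lambda}(h)\|$, the same is true of the product of two such factors. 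Hence $\Phi_\lambda(g,h)$ equals the time average of $\langle U_t\varepsilon_{0,\lambda}(g),A_\lambda e_\lambda\rangle\langle A_\lambda e_\lambda,U_t\varepsilon_{0,\lambda}(h)\rangle$, a computation that takes place entirely in the finite-dimensional space $H_{0,\lambda}$. Decomposing $H_{0,\lambda}$ into the finitely many eigenspaces of the continuous unitary one-parameter group $\{U_t\}$, with orthogonal projections $P_{j,\lambda}$, the eigencharacters attached to distinct eigenspaces are distinct characters of $\R$, so the cross-terms average to zero and we obtain
\[
\Phi_\lambda(g,h)=\sum_j\langle P_{j,\lambda}\varepsilon_{0,\lambda}(g),P_{j,\lambda}A_\lambda e_\lambda\rangle\,\langle P_{j,\lambda}A_\lambda e_\lambda,P_{j,\lambda}\varepsilon_{0,\lambda}(h)\rangle=\langle\varepsilon_{0,\lambda}(g),B_\lambda\,\varepsilon_{0,\lambda}(h)\rangle ,
\]
where $B_\lambda\colon H_{0,\lambda}\to H_{0,\lambda}$ is the positive operator $B_\lambda v=\sum_j\langle P_{j,\lambda}A_\lambda e_\lambda,v\rangle\,P_{j,\lambda}A_\lambda e_\lambda$. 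This $B_\lambda$ does not depend on the phase chosen in $e_\lambda$, and its trace is $\sum_j\|P_{j,\lambda}A_\lambda e_\lambda\|^2=\|A_\lambda e_\lambda\|^2=\operatorname{Trace}(A_\lambda^*A_\lambda)$.

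Comparing the two evaluations gives the identity of complex measures $\langle\varepsilon_{0,\lambda}(g),\varepsilon_{0,\lambda}(h)\rangle\,d\mu_0=\langle\varepsilon_{0,\lambda}(g),B_\lambda\varepsilon_{0,\lambda}(h)\rangle\,d\mu$ on $\Lambda_0$, valid for all $g,h\in\HH_0$. With $g=h$ this exhibits $\|\varepsilon_{0,\lambda}(h)\|^2\,d\mu_0$ as $\mu$-absolutely continuous; since the fiber dimension of $\{H_{0,\lambda}\}$ is constant and positive and the sections $\varepsilon_{0,\lambda}(h)$ with $h$ ranging over a countable basis of $\HH_0$ already span $H_{0,\lambda}$ at every point of $\Lambda_0$, every $\lambda$ lies in a set of the form $\{\,\varepsilon_{0,\lambda}(h)\neq 0\,\}$, and therefore $\mu_0\ll\mu$ on $\Lambda_0$. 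Writing $\rho=d\mu_0/d\mu$ and applying the measure identity to the (countably many) pairs from a basis of $\HH_0$, we find for $\mu$-almost every $\lambda$ that $\langle u,B_\lambda v\rangle=\rho(\lambda)\langle u,v\rangle$ for all $u,v$ in the span of the vectors $\varepsilon_{0,\lambda}(h)$, i.e.\ on all of $H_{0,\lambda}$; hence $B_\lambda=\rho(\lambda)\,\Id_{H_{0,\lambda}}$. Taking traces yields $\operatorname{Trace}(A_\lambda^*A_\lambda)=\operatorname{Trace}(B_\lambda)=\rho(\lambda)\dim(H_{0,\lambda})$, which is the assertion of the theorem. (As a byproduct, $B_\lambda=\rho(\lambda)\Id$ forces $\{U_t\}$ to act on $H_{0,\lambda}$ with simple spectrum and $A_\lambda e_\lambda$ to have components of equal length along its eigenlines, although neither was assumed at the outset.)

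The step I expect to demand the most care is the interchange of $\lim_{T\to+\infty}\tfrac1T\int_0^T$ with $\int_{\Lambda_0}\varphi\,d\mu$. From assumption (E) one gets $\|(W_th)_\lambda\|\leq\|A_\lambda e_\lambda\|\,(\|\varepsilon_{0,\lambda}(h)\|+1)$ once $t$ is large and $\lambda$ ranges over $\supp\varphi$, so a dominating function for the time-averaged integrands is available as soon as one knows that $\operatorname{Trace}(A_\lambda^{*}A_\lambda)$ is locally $\mu$-integrable on $\Lambda_0$; this should be arranged from the boundedness of $W$ — one has $\int_{\Lambda_0}\|(W_th)_\lambda\|^2\,d\mu=\|W_th\|_H^2\leq\|W\|^2\|h\|^2$, and Fatou's lemma applied to the averages controls $\lambda\mapsto\langle\varepsilon_{0,\lambda}(h),B_\lambda\varepsilon_{0,\lambda}(h)\rangle$ — and is in any case implicit in the requirement that $\mu_0$ be a locally finite Borel measure. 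The remaining ingredients — measurable selection of $e_\lambda$ and of the spectral data of $\{U_t\}$ on the fibers, and the elementary mean-ergodic computation on each $H_{0,\lambda}$ — are routine.
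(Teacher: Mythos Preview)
Your argument is correct and follows essentially the same route as the paper: you invoke Lemma~\ref{lemma-asymptotic-formula}, expand via the two direct integrals, use multiplicity one and assumption~(\ref{assumption-asymp-eigenfunct}) to replace $(W_th)_\lambda$ by $U_t$-translates in $H_{0,\lambda}$, average in $t$, and then identify the resulting positive fiber operator as a scalar via uniqueness (the paper's Lemma~\ref{lemma-unique-dir-int}). Your $B_\lambda$ is exactly the paper's $\operatorname{Av}[A_\lambda A_\lambda^*]$ written out in the eigenbasis of $\{U_t\}$, and the interchange step you flag is precisely what the paper isolates as Lemma~\ref{lemma-uniform-lambda-norm}, proved there by the same totality-of-sections device your Fatou sketch would ultimately require.
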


Here is the proof in outline.  We are assuming that
\begin{equation}
  \label{eq-old-integral-fmla}     
  \langle h , \pi_0(\varphi) h \rangle_{H_0} 
  =      
  \int_{\Lambda_0} 
  \bigl \langle h_{0,\lambda}  , h _{0,\lambda}\bigr\rangle_{H_{0,\lambda}} \,\varphi(\lambda) \, d\mu_0 (\lambda) .
\end{equation}  
We shall obtain from our remaining assumptions a new integral formula, namely
\begin{equation}
  \label{eq-new-integral-fmla}
  \langle h , \pi_0(\varphi) h \rangle_{H_0} 
  = 
  \int_{\Lambda_0} 
  \bigl \langle h_{0,\lambda} , \operatorname{Av}\bigl [ C^{\vphantom{*}}_\lambda  C_\lambda ^* \bigr ]h_{0,\lambda} 
  \bigr \rangle_{H_{0,\lambda}}
  \varphi(\lambda) d\mu(\lambda) ,
\end{equation}
where the operator $ \operatorname{Av} [ C^{\vphantom{*}}_\lambda C_\lambda ^* ] \colon H_{0,\lambda}\to H_{0,\lambda}$ is defined by the averaging formula
\[
  \operatorname{Av}\bigl [ C^{\vphantom{*}}_\lambda C_\lambda ^* \bigr ] = \lim_{T \to \infty}\frac{1}{T} \int _0^T U_{-t,\lambda } C^{\vphantom{*}}_\lambda C_\lambda ^* U_{t,\lambda} \, dt
\]
(since we are dealing here with operators on the finite-dimensional space $H_{0,\lambda}$, the limit certainly exists).  At this point, we can appeal to the following uniqueness result for spectral decompositions:

\begin{lemma}
  \label{lemma-unique-dir-int}
  Let $\{T_{\lambda}\}$ be a measurable field of positive operators on $\{ H_{0,\lambda}\}_{\lambda\in \Lambda _0}$.  Suppose that
  \begin{equation*}
    \int_{\Lambda_0} \bigl \langle h_{0,\lambda}  , T_\lambda h_{0,\lambda}  \bigr \rangle_{H_{0,\lambda}}\, \varphi (\lambda) \, d\mu (\lambda)
    = \int_{\Lambda_0} \bigl \langle h_{0,\lambda}  , h _{0,\lambda}\bigr\rangle_{H_{0,\lambda}} \,\varphi(\lambda) \, d\mu_0 (\lambda)
  \end{equation*}
  for every $h\in \HH_0$ and every continuous and compactly supported function $\varphi$.  Then $T_\lambda$ is a scalar multiple of the identity for $\mu$-almost all $\lambda\in \Lambda_0$. In addition $\mu_0$ is absolutely continuous with respect to $\mu$ on $\Lambda_0$ and
  \[
    T_\lambda = \frac{d\mu_0 }{d \mu} (\lambda) \cdot I_{H_{0,\lambda}}
  \]
  $\mu$-almost everywhere on $\Lambda_0$.
\end{lemma}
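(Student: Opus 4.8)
The plan is to reduce the identity in the hypothesis to an equality of ordinary scalar Radon measures on $\Lambda_0$, and then to use the finite-dimensionality of the fibers $H_{0,\lambda}$ together with the second countability of $\Lambda_0$ to pass from statements that hold ``for each fixed vector, $\mu$-almost everywhere'' to a statement that holds ``$\mu$-almost everywhere, for all vectors at once.'' For $h\in\HH_0$ put $f_h(\lambda)=\langle\varepsilon_{0,\lambda}(h),\varepsilon_{0,\lambda}(h)\rangle_{H_{0,\lambda}}$ and $g_h(\lambda)=\langle\varepsilon_{0,\lambda}(h),T_\lambda\varepsilon_{0,\lambda}(h)\rangle_{H_{0,\lambda}}$; then $f_h$ is continuous and nonnegative, while $g_h$ is Borel-measurable (it pairs the continuous section $\lambda\mapsto\varepsilon_{0,\lambda}(h)$ with the measurable section $\lambda\mapsto T_\lambda\varepsilon_{0,\lambda}(h)$) and nonnegative since each $T_\lambda\ge 0$. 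The hypothesis says exactly that the Borel measures $f_h\,d\mu_0$ and $g_h\,d\mu$ — both finite on compact sets, hence Radon — assign equal integrals to every $\varphi\in C_c(\Lambda_0)$, so they coincide. Restricting to the open set $\{f_h>0\}$, where $1/f_h$ is a finite continuous function, we get $d\mu_0=(g_h/f_h)\,d\mu$ on that set; in particular $\mu_0\ll\mu$ there.

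Next I would produce a \emph{countable} subfamily $h_1,h_2,\dots\in\HH_0$ whose images under the $\varepsilon_{0,\lambda}$ remain total, i.e.\ span $H_{0,\lambda}$ for every $\lambda\in\Lambda_0$. Given $\lambda_0\in\Lambda_0$, pick $k^{(1)},\dots,k^{(d)}\in\HH_0$ with $\varepsilon_{0,\lambda_0}(k^{(j)})$ a basis of the $d$-dimensional space $H_{0,\lambda_0}$ (possible since the $\varepsilon_{0,\lambda}$ form a total family and the fiber dimension is a constant $d$); the Gram determinant of the continuous sections $\lambda\mapsto\varepsilon_{0,\lambda}(k^{(j)})$ is continuous and nonzero at $\lambda_0$, so these sections stay a basis on a neighborhood of $\lambda_0$. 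Since $\Lambda_0$ is second countable it is Lindel\"of, so countably many such neighborhoods cover $\Lambda_0$; the corresponding countable set of vectors is the desired family $\{h_n\}$. Now $\Lambda_0=\bigcup_n\{f_{h_n}>0\}$, so the previous paragraph gives $\mu_0\ll\mu$ on all of $\Lambda_0$, which is (i). Write $\rho=d\mu_0/d\mu\ge 0$. From $f_h\,d\mu_0=g_h\,d\mu$ we get $g_h=\rho\,f_h$ $\mu$-a.e.\ for every $h\in\HH_0$ (on $\{f_h=0\}$ one has $g_h=0$ $\mu$-a.e., since $\int_A g_h\,d\mu=\int_A f_h\,d\mu_0=0$ for Borel $A\subseteq\{f_h=0\}$).

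Finally I would pass from the diagonal to the full operator. For each fixed $h\in\HH_0$ the identity $\langle\varepsilon_{0,\lambda}(h),(T_\lambda-\rho(\lambda)I_{H_{0,\lambda}})\varepsilon_{0,\lambda}(h)\rangle=0$ holds for $\mu$-a.e.\ $\lambda$; replacing $h$ by $h+i^k g$ for $k=0,1,2,3$ and invoking the linearity of the $\varepsilon_{0,\lambda}$ and the polarization identity, the off-diagonal identity $\langle\varepsilon_{0,\lambda}(h),(T_\lambda-\rho(\lambda)I_{H_{0,\lambda}})\varepsilon_{0,\lambda}(g)\rangle=0$ holds for $\mu$-a.e.\ $\lambda$, for each fixed pair $h,g\in\HH_0$. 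Applying this to the countable total family $\{h_n\}$ and discarding the union $N$ of the countably many exceptional $\mu$-null sets (one for each pair $h_m,h_n$), we find that for every $\lambda\notin N$ the sesquilinear form $(v,w)\mapsto\langle v,(T_\lambda-\rho(\lambda)I_{H_{0,\lambda}})w\rangle$ vanishes on the spanning set $\{\varepsilon_{0,\lambda}(h_n)\}_n$ of $H_{0,\lambda}$, hence vanishes identically; therefore $T_\lambda=\rho(\lambda)I_{H_{0,\lambda}}=\frac{d\mu_0}{d\mu}(\lambda)\,I_{H_{0,\lambda}}$ for $\mu$-a.e.\ $\lambda$, which is (ii).

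The only step that is not pure bookkeeping is the extraction of a countable total family: $\HH_0$ is typically uncountable, and the maps $\varepsilon_{0,\lambda}$ need not be bounded, so one cannot simply use a countable dense subset of $\HH_0$; the right tool is that linear independence of continuous sections is an open condition, combined with the second countability of $\Lambda_0$. Everything else — the coincidence of Radon measures agreeing on $C_c(\Lambda_0)$, polarization, and the finite-dimensional linear algebra — is routine.
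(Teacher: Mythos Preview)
Your proof is correct and follows essentially the same approach as the paper: identify the Radon--Nikodym derivative via uniqueness of Radon measures on the open set where a given section $\varepsilon_{0,\lambda}(h)$ is nonzero, then use that this derivative is independent of the choice of $h$ to force $T_\lambda$ to be scalar. The paper's proof is much terser---it simply notes that the ratio $\langle v_{0,\lambda},T_\lambda v_{0,\lambda}\rangle/\langle v_{0,\lambda},v_{0,\lambda}\rangle$ must equal $d\mu_0/d\mu$ regardless of the section chosen and stops there---whereas you make explicit the extraction of a countable total family (via the Gram-determinant/Lindel\"of argument) and the polarization step needed to pass rigorously from ``for each $h$, $\mu$-a.e.'' to ``$\mu$-a.e., for all vectors''; these are exactly the details the paper suppresses.
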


\begin{proof}
  For each point $\lambda _0 \in \Lambda_0$ there exists $h\in \HH_0$ for which the section $h_{0,\lambda} $ is nonzero at $\lambda_0$. It follows immediately from the uniqueness part of the Riesz representation theorem that $\mu_0$ is absolutely continuous with respect to $\mu$ near $\lambda_0$ with Radon-Nikodym derivative
  \[
    \frac{d\mu_0 }{d \mu} (\lambda)= \frac{\bigl \langle h_{0,\lambda} , T_\lambda h_{0,\lambda} \bigr \rangle_{H_{0,\lambda}}}{\bigl \langle h_{0,\lambda} , h_{0,\lambda} \bigr \rangle_{H_{0,\lambda}}} .
  \]
  Since the derivative is independent of $\{h_{0,\lambda} \}$ this implies that
  \[
    T_\lambda = \frac{d\mu_0 }{d \mu} (\lambda) \cdot I_{H_{0,\lambda}} ,
  \]
  almost everywhere, as required.
\end{proof}

Returning to the proof of Theorem~\ref{thm-mu-zero-in-terms-of-mu}, Lemma~\ref{lemma-unique-dir-int} tells us that the operator $\operatorname{Av}[ C^{\vphantom{*}}_\lambda C_\lambda ^* ]$ is a scalar multiple of the identity for $\mu$-almost-all $\lambda\in \Lambda _0$.  The computation
\[
  \Trace\left ( \operatorname{Av}\bigl [ C^{\vphantom{*}}_\lambda C_\lambda ^* \bigr ] \right ) = \Trace (C^{\vphantom{*}}_\lambda C_\lambda ^*) = \Trace (C^*_\lambda C^{\vphantom{*}}_\lambda ),
\]
determines the multiple, and the theorem follows.  So it remains to establish the integral formula \eqref{eq-new-integral-fmla}:

\begin{lemma}
  If $h\in \HH_0$ and if $\varphi$ is a continuous and compactly supported function on $\Lambda_0$, then
  \begin{equation*}
    \langle h , \pi_0(\varphi) h \rangle_{H_0} 
    = 
    \int_{\Lambda_0} 
    \bigl \langle h_{0,\lambda} , \operatorname{Av}\bigl [ C^{\vphantom{*}}_\lambda  C_\lambda ^* \bigr ]h_{0,\lambda} 
    \bigr \rangle_{H_{0,\lambda}}
    \varphi(\lambda) d\mu(\lambda) .
  \end{equation*}
\end{lemma}

\begin{proof}  
  It suffices to prove this formula for all functions $\varphi$ that are supported on compact sets $K\subseteq \Lambda _0$ over which the field $\{ H_\lambda\}$ is trivializable, and so we shall assume that here. In addition we shall use the notation
  \[ W_t = W U_t \colon H_0 \longrightarrow H.
  \]
  According to Lemma~\ref{lemma-asymptotic-formula},
  \begin{equation}
    \label{eq-asymp-integral0}
    \langle h , \pi_0(\varphi) h \rangle_{H_0} = \lim_{T\to +\infty} \frac{1}{T} \int_0^T \bigl \langle W_t h ,\pi( \varphi )W_{t}h \bigr \rangle_{H} d t .
  \end{equation}
  Now use Assumption~\ref{assumptionC} to write the integrand in the right hand side of \eqref{eq-asymp-integral0} as
  \begin{equation*}
    \bigl \langle  W_t h ,    \pi(\varphi)  W_{t}h \bigr \rangle_H 
    =  \int_{\Lambda_0}   \bigl \langle ( W_t h)_\lambda, ( W_t h)_\lambda  
    \bigr \rangle_{H_\lambda}  \, \varphi(\lambda)   d \mu(\lambda) .
  \end{equation*}
  Since the space $H_\lambda$ are one-dimensional, we can write
  \begin{equation*}
    \bigl \langle ( W_t h)_\lambda, ( W_t h)_\lambda \bigr \rangle_{H_\lambda} = \bigl \langle ( W_t h)_\lambda,v_\lambda \bigr \rangle_{H_\lambda}\cdot \bigl\langle v_\lambda, ( W_t h)_\lambda \bigr \rangle_{H_\lambda} ,
  \end{equation*}
  where $\{ v_\lambda\}$ is a continuous section of $\{ H_\lambda\}$ with $\|v_\lambda \|_{H_\lambda}=1$ for all $\lambda \in K$.  So
  \begin{multline}
    \label{eq-asymp-integral-revised}
    \langle h , \pi_0(\varphi) h \rangle_{H_0} \\ = \lim_{T\to +\infty} \frac{1}{T} \int_0^T \int_{\Lambda_0} \bigl \langle ( W_t h)_\lambda,v_\lambda \bigr \rangle_{H_\lambda}\cdot \bigl\langle v_\lambda, ( W_t h)_\lambda \bigr \rangle_{H_\lambda} \, \varphi(\lambda) d \mu(\lambda) d t .
  \end{multline}

  Consider now the difference
  \begin{multline}
    \label{eq-difference-to-zero}
    \bigl \langle (U_{t} h)_{0,\lambda} ,C_\lambda v_\lambda \bigr \rangle_{H_{0,\lambda}} \cdot \bigl\langle C_\lambda v_\lambda, (U_{t} h)_{0,\lambda} \bigr \rangle_{H_{0,\lambda}}
    \\
    - \bigl \langle ( W_t h)_\lambda,v_\lambda \bigr \rangle_{H_\lambda}\cdot \bigl\langle v_\lambda, ( W_t h)_\lambda \bigr \rangle_{H_\lambda} ,
  \end{multline}
  which we can write as
  \begin{multline*}
    \bigl \langle (U_{t} h)_{0,\lambda} ,C_\lambda v_\lambda \bigr \rangle_{H_{0,\lambda}} \left [ \bigl\langle C_\lambda v_\lambda, (U_{t} h)_{0,\lambda} \bigr \rangle_{H_{0,\lambda}} - \bigl\langle v_\lambda, ( W_t h)_\lambda \bigr \rangle_{H_\lambda}
    \right ] \\
    + \left [ \bigl \langle (U_{t} h)_{0,\lambda},C_\lambda v_\lambda \bigr \rangle_{H_{0,\lambda}} - \bigl \langle ( W_t h)_\lambda,v_\lambda \bigr \rangle_{H_\lambda} \right ] \bigl\langle v_\lambda, ( W_t h )_\lambda \bigr \rangle_{H_\lambda} .
  \end{multline*}
  The terms in the square brackets converge to $0$, as $t\to +\infty$, uniformly in $\lambda \in K$.  In addition, since
  \[
    \bigl | \bigl \langle (U_{t} h)_{0,\lambda} ,C_\lambda v_\lambda \bigr \rangle_{H_{0,\lambda}} \bigr | = \bigl | \bigl \langle U_{t,\lambda} h_{0,\lambda} ,C_\lambda v_\lambda \bigr \rangle_{H_{0,\lambda}} \bigr | \le \| h_\lambda\|\cdot \| C_\lambda v_\lambda\|,
  \]
  we see that $ \langle (U_{t} h)_{0,\lambda} ,C_\lambda v_\lambda \rangle_{H_{0,\lambda}}$ is uniformly bounded in $t$ and $\lambda \in K$.  It follows from this and Assumption~\ref{assumptionD} that $\langle ( W_t h)_\lambda,v_\lambda \rangle_{H_\lambda}$ is uniformly bounded too.  So the expression \eqref{eq-difference-to-zero} converges to zero as $t\to \infty$, uniformly in $\lambda \in K$.

  Observe next that
  \begin{multline*}
    \bigl \langle (U_{t} h)_{0,\lambda} ,C_\lambda v_\lambda \bigr \rangle_{H_{0,\lambda}}\cdot \bigl \langle C_\lambda v_\lambda , (U_{t} h)_{0,\lambda} \bigr \rangle_{H_{0,\lambda}}
    \\
    \begin{aligned}
      & =
      \bigl \langle U_{t,\lambda }  h _{0,\lambda} ,C_\lambda  v_\lambda \bigr \rangle_{H_{0,\lambda}} \cdot \bigl\langle C_\lambda  v_\lambda, U_{t,\lambda }  h_{0,\lambda}\bigr \rangle_{H_{0,\lambda}}  \\
      & = \bigl \langle h_{0,\lambda} ,U_{-t,\lambda } C_\lambda C_\lambda ^* U_{t,\lambda } h_{0,\lambda}\bigr \rangle_{H_{0,\lambda}} .
    \end{aligned}
  \end{multline*}
  It follows from our analysis of \eqref{eq-difference-to-zero} that the inner integral in \eqref{eq-asymp-integral-revised} is asymptotic to the integral
  \begin{equation*}
    \int_{\Lambda _0} \bigl \langle  h_{0,\lambda},U_{-t,\lambda } C_\lambda  C_\lambda ^* U_{t,\lambda }  h_{0,\lambda}  \bigr \rangle_{H_{0,\lambda}}  \, \varphi(\lambda) \, d \mu(\lambda)  
  \end{equation*}
  (that is, the difference converges to zero as $t\to +\infty$). As a result,
  \begin{multline*}
    \langle h , \pi_0(\varphi) h \rangle_{H_0}
    \\
    = \lim_{T\to +\infty} \frac{1}{T} \int_0^T \Bigl ( \int_{\Lambda_0} \bigl \langle h_{0,\lambda},U_{-t,\lambda } C_\lambda C_\lambda ^* U_{t,\lambda} h_{0,\lambda} \bigr \rangle_{H_{0,\lambda}} \, \varphi(\lambda)\, d\mu(\lambda) \Bigr) \, d t ,
  \end{multline*}
  It now follows from Fubini's theorem, that
  \begin{multline*}
    \langle h , \pi_0(\varphi) h \rangle_{H_0}
    \\
    = \lim_{T\to +\infty} \int_{\Lambda _0} \Bigl (\frac{1}{T} \int_0^T \bigl \langle h_{0,\lambda} ,U_{-t,\lambda} C_\lambda C_\lambda ^* U_{t,\lambda} h_{0,\lambda} \bigr \rangle_{H_{0,\lambda}} \, d t \Bigr ) \varphi(\lambda) d\mu(\lambda) .
  \end{multline*}
  The integral in the parentheses is uniformly bounded in $T$.  Therefore we can interchange the limit as $T\to {+}\infty$ and the integral over $\Lambda _0$ to obtain \eqref{eq-new-integral-fmla}, as required.
\end{proof}

Theorem~\ref{thm-mu-zero-in-terms-of-mu} gives a formula for the measure $\mu_0$ in terms of the measure $\mu$.  But since our goal is to obtain information about the measure $\mu$, we should invert this formula:

\begin{theorem}
  \label{thm-mu-in-terms-of-mu-zero}
  The measue $\mu$ is absolutely continuous with respect to the measure $\mu_0$ on $\Lambda_0$, and the Radon-Nikodym derivative of $\mu$ with respect to $\mu_0$ on $\Lambda_0$ is
  \[
    \pushQED{\qed} \frac{d\mu\,}{d \mu_0}(\lambda) = \frac{\dim (H_{0,\lambda})}{\operatorname{Trace}(C_\lambda ^*C_\lambda ^{\phantom{'}})} .\qedhere \popQED
  \]
\end{theorem}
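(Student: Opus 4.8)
The plan is to invert the Radon-Nikodym derivative furnished by Theorem~\ref{thm-mu-zero-in-terms-of-mu}; the only substantive point is that assumption~\eqref{assumption-A-nonzero} forces this derivative to be everywhere positive on $\Lambda_0$.

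First I would set $f(\lambda)=\operatorname{Trace}(A_\lambda^*A_\lambda)/\dim(H_{0,\lambda})$, so that Theorem~\ref{thm-mu-zero-in-terms-of-mu} says precisely that $\mu_0\ll\mu$ on $\Lambda_0$ with $d\mu_0/d\mu=f$; in particular $f$ is $\mu$-measurable and finite $\mu$-almost everywhere. Next I would observe that in fact $f(\lambda)>0$ for \emph{every} $\lambda\in\Lambda_0$: by assumption~\eqref{assumption-A-nonzero} the operator $A_\lambda$ is nonzero, so the positive operator $A_\lambda^*A_\lambda$ is nonzero and thus has strictly positive trace, while $\dim(H_{0,\lambda})$ is a finite, nonzero constant on $\Lambda_0$ by assumption~\eqref{assumption-fine-map}.

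Finally I would verify directly that $1/f$ is a density of $\mu$ with respect to $\mu_0$. For any Borel subset $E\subseteq\Lambda_0$, the identity $d\mu_0=f\,d\mu$ together with the formula for integration against a density gives
\[
  \int_E \frac{1}{f(\lambda)}\,d\mu_0(\lambda)=\int_E \frac{1}{f(\lambda)}\,f(\lambda)\,d\mu(\lambda)=\mu(E),
\]
the cancellation being legitimate because $0<f<\infty$ $\mu$-almost everywhere (the $\mu$-null set on which $f=\infty$ is also $\mu_0$-null since $\mu_0\ll\mu$, so it contributes nothing to the left-hand integral). Taking $E$ to be an arbitrary $\mu_0$-null set then shows $\mu(E)=0$, so $\mu\ll\mu_0$ on $\Lambda_0$, and the displayed identity exhibits $d\mu/d\mu_0=1/f=\dim(H_{0,\lambda})/\operatorname{Trace}(A_\lambda^*A_\lambda)$, as claimed. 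There is no real obstacle beyond this routine bookkeeping: the one genuine ingredient is the nonvanishing of $A_\lambda$, which is exactly why assumption~\eqref{assumption-A-nonzero} was imposed — and this is why the text presents the theorem as an immediate consequence of Theorem~\ref{thm-mu-zero-in-terms-of-mu}.
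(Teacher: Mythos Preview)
Your proposal is correct and matches the paper's approach exactly: the paper presents the theorem as an immediate consequence of Theorem~\ref{thm-mu-zero-in-terms-of-mu} together with assumption~\eqref{assumption-A-nonzero}, and you have simply written out the routine Radon--Nikodym inversion that justifies this. The only minor quibble is that the $A_\lambda$ are defined only for $\mu$-almost every $\lambda$ (see assumption~\eqref{assumption-asymp-eigenfunct}), so $f>0$ holds $\mu$-a.e.\ rather than literally everywhere, but your argument already accommodates this.
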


\section{Sturm-Liouville Operators}
\label{sec-SL}

In this section we shall apply the approach of Section~\ref{sec-asymp-related-reps} to Sturm-Liouville operators on the half-line.  So let
\begin{equation*}
  D = - \frac{d\,\,}{dx} \cdot p(x) \cdot \frac{d\,\,} {dx} + q(x) ,
\end{equation*}
where the coefficient functions $p(x)$ and $q(x)$ are smooth and real-valued on $(0,\infty)$, and where $p(x)$ is everywhere positive.  We shall assume that $D$ is a self-adjoint operator on some domain that includes $C_c^\infty (0,\infty)$.  

We shall study the following examples (which may be generalized considerably).

 \begin{example}
\label{ex-simple-sturm-liouville}
If $p$ and $q$ are in fact smooth on $[0,\infty)$ and eventually constant,  with $p$ positive,  as in Section~\ref{sec-introduction}, then $D$ is  essentially self-adjoint on the domain of smooth, compactly supported functions on $ [0,\infty) $ that vanish at $0$.
\end{example}

\begin{example}
\label{ex-sl2-casimir} 
If $G{=}SL(2,\R)$ and $K{=}SO(2)$, then the symmetric space $G/K$ may be identified with the hyperbolic plane (with $G$ acting as isometries on the plane).   The Laplace-Beltrami operator $\Delta$ is essentially self-adjoint on the space of smooth and compactly supported functions on $G/K$.  On $K$-invariant functions it acts as 
\[
\Delta = - \frac{d\;\;}{dr^2} - \coth (r) \frac{d\;}{dr} ,
\] 
where $r$ is the  radial coordinate in the polar coordinate system   associated to the action of $K$.  Now identify the $K$-fixed part of $L^2(G/K)$ with $L^2(0,\infty)$   using the radial coordinate and multiplying by   $\sinh(r)^{\frac 12}$ (the latter comes from the formula $d \mathrm{Area} = \sinh(r) dr d\theta$). We obtain an essentially-self adjoint operator 
\[
\Delta  = D+ \tfrac 14 =  -\frac{d\;\;}{dx^2}  -  \tfrac 14 \operatorname{csch}^2(x) + \tfrac 14 
\]
on $L^2 (0,\infty)$ (we have subtracted the term $1/4$ from $\Delta$ with Lemma~\ref{lem-asymp-inclusion} below in mind).
\end{example}

Associated to the unbounded self-adjoint operator $D$ on the Hilbert space $H=L^2 (0,\infty)$ is the functional calculus representation
\begin{gather*}
  \pi \colon C_0(\R) \longrightarrow B(H) \\
  \pi\colon \varphi \longmapsto \varphi(D).
\end{gather*}
We shall compare $\pi$ to the functional calculus representation
\begin{gather*}
  \pi _0 \colon C_0(\R) \longrightarrow B(H_0) \\
  \pi_0 \colon \varphi \longmapsto \varphi(D_0),
\end{gather*}
where $D_0= -d^2/dx^2$ and $H_0 = L^2(-\infty, \infty)$. Here we view $-d^2/dx^2$ is an essentially self-adjoint operator  on the   domain of smooth, compactly supported functions, and we take $D_0$ to be its self-adjoint extension.

Define $U_t\colon H_0 \to H_0$ to be the translation operator
\[
  (U_th)(x) = h(x\!-\!t) .
\]
Obviously each $\varphi(D_0)$ commutes with each $U_t$.  Denote by
\[
  W \colon H_0\longrightarrow H
\]
the orthogonal projection (which restricts functions on $(-\infty,\infty)$ to functions on $(0,\infty)$, of course).  The following computation checks that $\pi_0$ is asymptotically contained in $\pi$, assuming that  the coefficients of $D$ converge to constant values.

\begin{lemma}
  \label{lem-asymp-inclusion}
Assume   that the coefficients of $D$ satisfy
\begin{equation*}
  \lim_{x\to\infty} p(x) = 1,  \quad \lim_{x\to\infty} p'(x) = 0   \quad \text{and} \quad \lim_{x\to \infty} q(x) = 0 .
\end{equation*}
  If $\varphi\in C_0(\R)$, and if $g,h\in L^2 (-\infty,\infty)$, then
  \begin{equation*}
    \lim_{t\to +\infty} \left  [ \bigl\langle  WU_tg,    \varphi(D) WU_t h  \bigr \rangle _{L^2 (0,\infty)}  
      -  \bigl\langle  g, \varphi(D_0)   h \bigr \rangle_{L^2 (-\infty,\infty)}  \right  ]  = 0 .
  \end{equation*}
\end{lemma}

\begin{proof}
  We shall prove that
  \begin{equation}
    \label{eq-asymptotic-strong-convergence}
    \lim_{t\to + \infty} \bigl \|  \varphi (D) W U_t h - WU_t \varphi(D_0)  h \bigr \|  _{L^2 (0,\infty)} = 0
  \end{equation}
  for every $h\in L^2 (-\infty,\infty)$, which will suffice.  The set of all $\varphi\in C_0(\R)$ satisfying \eqref{eq-asymptotic-strong-convergence} is a norm-closed subalgebra of $C_0(\R)$, and it therefore suffices to show that the resolvent functions $\varphi(\lambda) = (\lambda \pm i)^{-1}$ belong to it.  Moreover it suffices to check \eqref{eq-asymptotic-strong-convergence} for each of these two functions $\varphi$ and for a dense set of functions $h$ in $L^2 (-\infty, \infty)$.

  Let $\varphi(x) = (x\pm i)^{-1}$.  We shall calculate the limit \eqref{eq-asymptotic-strong-convergence} when
  \[
    h = (D_0 \pm i I)f\quad \text{and} \quad f\in C_c^\infty (-\infty, \infty) .
  \]
  If $f\in C_c^\infty (-\infty ,\infty )$, and if $t\gg 0$, then $WU_t f $ is a smooth and compactly supported function on $(0,\infty)$, and we compute that
  \begin{equation*}
    \begin{aligned}
      \varphi (D) WU_th - WU_t\varphi(D_0)h
      & = (D\pm iI)^{-1} WU_t (D_0\pm  iI)f  -  WU_t  f  \\
      & = (D\pm iI)^{-1} (D-D_0)W U_t f ,
    \end{aligned}
  \end{equation*}
  where, in the last line, we are regarding $D_0$ as a differential operator acting on the smooth and compactly supported functions on $(0,\infty)$.  Our assumptions on the coefficients of $D$   imply that
  \[
    \lim_{t \to + \infty} \| (D-D_0)W U_t f \| = 0,
  \]
  and so \eqref{eq-asymptotic-strong-convergence} is proved for $\varphi(x) = (x\pm i)^{-1}$, as required.
\end{proof}

Assumptions \ref{assumptionA} and \ref{assumptionB} about the representation $\pi_0$ from the previous section are easily obtained from the Fourier transform
\[
  \widehat h (\xi ) = \int_{-\infty} ^\infty h(x) e^{- i \xi x} \, dx ,
\]
as follows.  To begin, let $ \Lambda _0 = (0,\infty), $ and for $\lambda \in \Lambda _0$ define $H_{0,\lambda}$ to be the two-dimensional vector space of functions on the line spanned by $e^{i\sqrt{\lambda} x}$ and $e^{-i\sqrt{\lambda} x}$. Equip $H_{0,\lambda}$ with the inner product that makes these two functions an orthonormal basis.  The family $\{ H_{0,\lambda}\}_{\lambda > 0}$ obviously forms a continuous field of Hilbert spaces over $\Lambda _0$ with constant and finite fiber dimension.

Now let $\HH_0$ be space of smooth and compactly supported functions in $H_0$.  The Fourier transform associates to each $h\in \HH_0$ a continuous section $\{ h _{0,\lambda} \}$ of the continuous field, namely
\[
  h_{0,\lambda} = \widehat h ( \sqrt{\lambda}) e^{ i \sqrt{\lambda} x} + \widehat h(- \sqrt{\lambda}) e^{- i \sqrt{\lambda} x} .
\]
Moreover it follows from Plancherel's formula that
\[
  \langle h, \varphi(D_0) g\rangle _{L^2 (-\infty, \infty)} = \int_{\Lambda _0}\langle h_{0,\lambda} , g_{0,\lambda} \rangle_{H_{0,\lambda}}\,\varphi(\lambda) \, d\mu_0(\lambda),
\]
where
\begin{equation}
  \label{eq-mu-0-fmla}
  d\mu_0( \lambda ) = \frac{1}{4 \pi} \frac{d\lambda}{\sqrt{\lambda}}.
\end{equation}
So Assumption~\ref{assumptionA} is satisfied.  The unitary actions
\[
  U_{t,\lambda} \colon a e^{ i \sqrt{\lambda} x} +b e^{- i \sqrt{\lambda} x} \longmapsto e^{ -i \sqrt{\lambda} t}a e^{ i \sqrt{\lambda} x} +e^{ i \sqrt{\lambda} t}b e^{- i \sqrt{\lambda} x}
\]
on the fibers $H_{0,\lambda}$ decompose the translation action on $L^2(-\infty,\infty)$, as in Assumption~\ref{assumptionB}.

Let us turn now to the representation $\pi$ of $C_0(\R)$. General theory guarantees that $\pi$ has a measurable direct integral decomposition
\begin{equation}
  \label{eq-direct-integral}
  L^2 (0,\infty)  \cong  \int^\oplus_\R H_\lambda \, d\mu(\lambda) .
\end{equation}
This means that there exists:
\begin{enumerate}[\rm (i)]
  \item A Borel-measurable field of Hilbert spaces, $\{ H_\lambda\} _{\lambda \in \R}$, as in \cite[Part II, Chapter 1]{Dixmier81}.
  \item A Borel measure $\mu$ on $\R$.
  \item A unitary isomorphism from $L^2 (0,\infty)$ to the Hilbert space of square-inte\-grable sections of the measurable field, $h\mapsto \{ h_\lambda \}_{\lambda \in \R}$,
  under which the representation $\pi$ corresponds to the representation of $C_0(\R)$ on square-integrable sections by pointwise multiplication. Thus if $g\in H$ and if $\varphi \in C_0(\R)$, then
  \[
   (\pi(\varphi)g)_\lambda = \varphi(\lambda)  g_\lambda
  \]
  for $\mu$-almost every $\lambda \in \R$.
\end{enumerate}
See \cite[Part II, Chapter 6, Theorem 2]{Dixmier81}. 
We need to upgrade this  measurable decomposition to a continuous decomposition, as required by Assumption~\ref{assumptionC}.  We don't know the full extent to which this is possible,  but the  \emph{Gelfand-Kostyuchenko   method}, which we shall now review, handles the examples of concern to us.  (See \cite[Section 1]{Bernstein88} for a concise account of the Gelfand-Kostyu\-chenko method, as well as applications that are closely related to those in this paper.)

The inclusion of the topological vector space $C_c^\infty (0,\infty)$ into $L^2 (0, \infty)$ factors through a Hilbert-Schmidt operator.  That is, there is a commuting diagram
\begin{equation}
\label{eq-hilbert-schmidt-factorization}
  \xymatrix{ C_c^\infty (0,\infty) \ar[rr]^{\text{inclusion}} \ar[dr]_{\text{continuous}}& & L^2 (0,\infty) \\ &K\ar[ur]_{\text{Hilbert-Schmidt}} & }
\end{equation}
where $K$ is a Hilbert space.  This has the following consequence:

\begin{lemma}[See for example {\cite[Chapter VII, Section 1]{Maurin67}}]
\label{lem-g-k-lemma}
 For all $\lambda\in \R$ there exist continuous linear operators
  \begin{equation}
    \label{eq-epsilon-maps}
    \varepsilon_\lambda\colon  C_c^\infty (0,\infty )  \stackrel{ }\longrightarrow H_\lambda
  \end{equation}
  such that if $h\in C_c^\infty (0,\infty)$, and if $\{h_\lambda\}_{\lambda \in \R}$ is the associated square-integrable section of $\{ H_\lambda\}_{\lambda\in \R}$, then $ h_\lambda = \varepsilon_\lambda(h) $ for $\mu$-almost every $\lambda \in \R$.  \qed
\end{lemma}

Since $C_c^\infty (0,\infty)$ is dense in the Hilbert space $L^2 (0,\infty)$, the maps $\varepsilon_\lambda$  have dense range for $\mu$-almost every $\lambda$.  The adjoint operators
\begin{equation}
  \label{eq-adjoint-epsilon}
  \varepsilon ^* _\lambda \colon  H_\lambda^*  \longrightarrow C_c^\infty (0,\infty)^*
\end{equation} 
are therefore injective for $\mu$-almost every $\lambda$.  That is, for almost every $\lambda{\in} \R$ the map $\varepsilon_\lambda ^*$ is defined and embeds $H_\lambda^*$ into the space of distributions on $\R$.

Keeping in mind the Hilbert space isomorphism $ H^*_\lambda \cong \overline{H_\lambda}$, it follows from   Lemma~\ref{lem-g-k-lemma} and the definitions  that if $h\in C_c^\infty (0,\infty)$, and if $\{ v_\lambda \}_{\lambda \in \R}$ is a measurable section of $\{ H_\lambda\}_{\lambda \in \R}$, then
\[
  \langle v_\lambda, h_\lambda\rangle_{H_\lambda} = \langle v_\lambda, \varepsilon_\lambda (h)\rangle_{H_\lambda} = \int_0^\infty \overline{\varepsilon_\lambda ^*(v_\lambda ) }\cdot h ,
\]
for $\mu$-almost every $\lambda \in \R$, where the right-hand integral is the pairing between distributions and test functions.  Using this and the propery (iii) above, we find that if $V_\lambda = \overline{\varepsilon_\lambda ^* (v_\lambda)}$, then
\[
  \int_0^\infty D V_\lambda \cdot h = \int _0^\infty \lambda V_\lambda \cdot h
\]
for $\mu$-almost every $\lambda \in \R$ (the operator $D $ is applied to $V_\lambda $ in the sense of distributions) and since $C_c^\infty (0,\infty)$ is separable it follows that
\[
  D V_\lambda = \lambda V_\lambda
\]
for $\mu$-almost every $\lambda $. Thus for almost every $\lambda$, the morphism $\overline{\varepsilon_\lambda^*}$ embeds
$H_\lambda$ into the space of $\lambda$-eigendistributions for $D$ on $(0,\infty))$.  The latter is $2$-dimensional  and consists of smooth functions on $(0,\infty)$.   

Let us  study the implications of all this for the operators in Example~\ref{ex-simple-sturm-liouville}.

\begin{lemma}
  \label{lemma-mult-one}
 Let $D$ be as in  Example~\textup{\ref{ex-simple-sturm-liouville}}.
  For $\mu$-almost every $\lambda\in \R$ the operator $\overline{\varepsilon_\lambda ^*}$  maps $H_\lambda$ isomorphically to the one-dimensional space of \textup{(}smooth\textup{)} solutions of the differential equation $DG_\lambda = \lambda G_\lambda$ that satisfy the boundary condition $G_\lambda (0) = 0$.

\end{lemma}

\begin{proof}
We can repeat the Gelfand-Kostyuchenko method above using the space of  functions in $C_c^\infty [0,\infty)$ that vanish at $0$ in place of $C_c^\infty(0,\infty)$.
 If $f$ and $g$ belong to this space, then for almost every $\lambda$ we can write
  \begin{multline}
    \label{eq-gelf-kost-wronski}
    \langle \varepsilon_\lambda (Dh), g_\lambda \rangle _{H_\lambda} - \langle \varepsilon_\lambda (h), (Dg)_\lambda \rangle _{H_\lambda}
    \\
    \begin{aligned}
      & =  \langle \varepsilon_\lambda (Dh),  g_\lambda \rangle _{H_\lambda} - \langle \varepsilon_\lambda (h), \lambda  g_\lambda \rangle _{H_\lambda} \\
      & = \int_0^\infty  \overline{(Dh) (x)} G_\lambda  (x)\, dx  -  \int_0^\infty \overline{ h (x) } \lambda   G_\lambda (x) \, dx\\
      & = \int_0^\infty \overline{(Dh) (x)} G_\lambda (x)\, dx - \int_0^\infty \overline{ h (x) } (D G_\lambda) (x) \, dx ,
    \end{aligned}
  \end{multline}
  where $G_\lambda=\overline{\varepsilon_\lambda^*} ( g_\lambda)$. Assume now that in addition $h'(0)=1$.  Calculating the difference of integrals using the fundamental theorem of calculus we find that
  \[
    \int_0^\infty \overline{(Dh) (x)} G_\lambda (x)\, dx - \int_0^\infty \overline{ h (x) } (D G_\lambda) (x) \, dx = p(0) G_\lambda (0) .
  \]
  The top expression in \eqref{eq-gelf-kost-wronski} is an integrable function of $\lambda$, and therefore so is $ G_\lambda (0)$.  If $\varphi$ is any continuous and compactly supported function on $\R$, then by (iii) above the integral of the left-hand side of \eqref{eq-gelf-kost-wronski}, times $\varphi(\lambda)$, is equal to zero, and so
  \[
    \int_{0}^\infty G_\lambda(0) \,\varphi (\lambda ) \, d\mu(\lambda) = 0 ,
  \]
  It follows that $G_\lambda (0) = 0$ for almost every $\lambda$.  The lemma follows from this because the elements $ g_\lambda$ span $H_\lambda$, for almost all $\lambda$.\end{proof}

Now form  the family of    one-dimensional    eigenfunction spaces 
\begin{equation}
\label{eq-cts-field-eigenspace-fiber}
\{ \, F_\lambda\colon [0,\infty){\to} \C\, :\,  DF_\lambda = \lambda F_\lambda , \;\; F_\lambda (0)=0\,\} .
\end{equation}
These assemble to form the fibers of a smooth vector bundle using the usual topology of convergence of smooth functions. Equip each  with the norm $\| F_\lambda\|    = | F_\lambda '(0)|  $  to obtain a continuous field of one-dimensional Hilbert spaces over $\R$ for which $\lambda \mapsto F_\lambda$ is a continuous section if $\lambda\mapsto F'_\lambda(0)$ is continuous.
 
 Lemma~\ref{lemma-mult-one} shows that  for almost every $\lambda$ the morphism $\overline{\varepsilon_\lambda^*}$ is a vector space isomorphism from   the Hilbert space fiber  $H_\lambda$ in the direct integral decompostion  \eqref{eq-direct-integral}   to the fiber \eqref{eq-cts-field-eigenspace-fiber} above. The morphism is not necessarily isometric, but we can remedy this possible shortcoming by changing the  inner products on the $H_\lambda$, and the measure $\mu$, using
\[
  \langle \,\,\,,\,\,\rangle _{H_\lambda} : =   \| v_\lambda \|^{-2}_{H_\lambda}\cdot  \langle \,\,\,,\,\,\rangle _{H_\lambda} \quad\text{and} \quad d\mu (\lambda) : =     \| v_\lambda \|^{2}_{H_\lambda}  \cdot d\mu(\lambda)
\]
where $v_\lambda $ is chosen so that if $F_\lambda =\overline{\varepsilon_\lambda^*}(v_\lambda)$ then $F'_\lambda(0)=1$.  With these changes, we obtain a new direct integral decomposition of the form \eqref{eq-direct-integral}   (the map $h\mapsto \{ h_\lambda \} $ from $L^2 (0,\infty)$ to square-integrable sections is not changed),  and now the morphisms $\overline{\varepsilon^*_\lambda}$ are unitary isomorphisms, for almost every $\lambda$.

Now take
$
  \HH = C_c^\infty (0,\infty)  
$,
and if $h\in \HH$, then according to the definitions, if $F_\lambda'(0)=1$, then
\[
  \overline{\varepsilon^*_\lambda}(h_\lambda) =  \int_0^\infty \overline{F_\lambda(x)} h(x)\, dx \cdot F_\lambda .
\]
for almost every $\lambda$. The right hand side  is a continuous section of the field $\{ H_\lambda\}$ since the function $F_\lambda$ depends continuously (in fact analytically) on $\lambda$.
This verifies Assumption~\ref{assumptionC} for the Sturm-Liouville operators from Section~\ref{sec-introduction}.

As for the Laplace-Beltrami operator from Example~\ref{ex-sl2-casimir}, we can repeat the Gel\-fand-Kostyuchenko analysis, as in Lemma~\ref{lem-g-k-lemma} and the discussion following the lemma, using the space of  smooth, compactly supported, $K$-invariant functions on $G/K$ in place of $C_c^\infty (0,\infty)$, and obtain, for almost every $\lambda$, embeddings of $H_\lambda$ into the $K$-invariant $\lambda$-eigenfunctions of $D$.  But the latter space is actually one-dimensional already (the $\lambda$-eigenfunctions are distinguished from one another by their values at $eK\in G/K$) and the family of all such  eigen\-spaces spaces carries the structure of a continuous field of one-dimensional Hilbert spaces,  since there are explicit formulas for the eigenfunctions that vary smoothly with $\lambda$. See  \cite[Chap.\,2,\,Thms\;1.1\;\&\;1.2]{Helgason72}.  The argument above then handles Assumption~\ref{assumptionC} in this case.

Finally, we need to   verify Assumption~\ref{assumptionD}.  For this purpose we shall     assume a bit more about the coefficients of $D$, namely that
\begin{equation}
  \label{eq-hypotheses-p-and-q2}
  \int_{x_0}^\infty|1- {p(x)^{-1}}| \, d x< \infty \quad \text{and} \quad \int_{x_0}^\infty|q(x)| \, d x< \infty.
\end{equation}
Certainly these conditions hold in our examples. 

\begin{proposition}
  \label{prop-eigenfunction-asymptotics}
  Let $\lambda > 0$ and let $F_\lambda$ be the $\lambda$-eigenfunction of $D$ with $F'(0) =1$. If \eqref{eq-hypotheses-p-and-q2} holds, then there is a unique nonzero $\lambda$-eigenfunction $F_{0,\lambda}$ of $D_0$ such that
  \begin{equation*}
    \lim_{x\to \infty}    \bigl |F_\lambda(x)-F_{0,\lambda}(x)  \bigr |   = 0.    \end{equation*}
  The convergence is uniform over compact  sets of eigenvalues $\lambda$ in $(0,\infty)$.  \qed
\end{proposition}

This is standard in differential equations and we will omit the proof here, but see for example Weyl's paper \cite{Weyl10}).  Of course Proposition \ref{prop-eigenfunction-asymptotics} is obvious for the operators from Example~\ref{ex-simple-sturm-liouville}.

In any case, using Proposition~\ref{prop-eigenfunction-asymptotics} we define injective operators
\begin{equation*}
  C_\lambda  \colon H_\lambda\longrightarrow  H_{0,\lambda}
\end{equation*}
by $C_\lambda \colon F_\lambda \mapsto F_{0,\lambda}$ where $F_\lambda$ and $F_{0,\lambda}$ are as in Proposition \ref{prop-eigenfunction-asymptotics}. If $h$ is a smooth, compactly supported function on $\mathbb{R}$, and if $v_\lambda = F_\lambda$, then
\begin{equation}
  \label{eq-check-assumptionD}
  \bigl \langle C_\lambda  v_\lambda, (U_th)_{0,\lambda}\bigr \rangle_{H_{0,\lambda}} - \bigl \langle v_\lambda ,(W_t h)_{\lambda} \bigr \rangle_{H_\lambda}
  =
  \int_0^\infty (\overline{F_{0,\lambda}}(x)-\overline{F_\lambda}(x))h(x{-}t)dx
\end{equation}
(this formula holds as long as $t$ is large that $h(x{-}t)$ is supported on the positive $x$-axis).  Proposition~\ref{prop-eigenfunction-asymptotics} implies that if $\lambda$ is confined to a compact set in $(0,\infty)$, then the integral converges to zero, uniformly in $\lambda$, as required by Assumption~\ref{assumptionD}.

We arrive therefore the following result, which is Weyl's theorem for the positive spectrum of $D$:
\begin{theorem}
  \label{thm-weyl-redux}
 Let $D$ be one of the operators from Example~\ref{ex-simple-sturm-liouville}.  Let $g$ and $h$ be smooth and compactly supported functions on $[0,\infty)$.  If $0{<}\alpha{<} \beta$, and if $P_{[\alpha,\beta]}$ is the spectral projection for $D$ associated to the interval $[\alpha,\beta]$, then
  \[
    \langle g, P_{[\alpha,\beta]}h\rangle = \frac{1}{4 \pi} \int _\alpha^\beta \langle g, F_\lambda\rangle \langle F_\lambda, h\rangle \frac{1}{|c(\lambda)|^2}\, \frac{ d\lambda }{\sqrt{\lambda}}
  \]
  where $F_\lambda$ is the nonzero  $\lambda$-eigenfunction with $F_\lambda(0)=0$ and $F'_\lambda(0)=1$,  and $c(\lambda)$ is characterized by
  \[
    \lim_{x\to +\infty} \bigl (F_\lambda (x) - c(\lambda ) e^{i\sqrt{\lambda} x} - \overline{c(\lambda ) }e^{-i\sqrt{\lambda} x}\bigr ) = 0
  \]
 \textup{(}the inner products are standard $L^2$-inner products and the integral is absolutely convergent\textup{)}.
\end{theorem}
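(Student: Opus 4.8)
The plan is to assemble the verifications made in this section and read off the conclusion from Theorem~\ref{thm-mu-in-terms-of-mu-zero}. By the preceding discussion all the assumptions of Section~\ref{sec-asymp-related-reps} now hold for the pair $(\pi,\pi_0)$ attached to $D$ and $D_0$: one has $\Lambda_0=(0,\infty)$, the fibres $H_{0,\lambda}$ spanned orthonormally by $e^{\pm i\sqrt\lambda x}$, the measure $d\mu_0=\tfrac{1}{4\pi}\tfrac{d\lambda}{\sqrt\lambda}$ by \eqref{eq-mu-0-fmla}, and the operators $A_\lambda$ sending a $\lambda$-eigenfunction of $D$ to its asymptotic $\lambda$-eigenfunction of $D_0$ as in Proposition~\ref{prop-eigenfunction-asymptotics}. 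So the task is only to translate the abstract Radon--Nikodym formula into the concrete statement above.

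First I would fix a measurable unit section $\lambda\mapsto u_\lambda$ of the (almost everywhere one-dimensional) field $\{H_\lambda\}$ over $\Lambda_0$, and set $\Phi_\lambda=\varepsilon_\lambda^*(u_\lambda)$; by Lemma~\ref{lemma-mult-one} this is a $\lambda$-eigenfunction of $D$ vanishing at $0$, so $\Phi_\lambda=\gamma(\lambda)F_\lambda$ for a scalar $\gamma(\lambda)$, where $F_\lambda$ is the normalized eigenfunction of the theorem, and after multiplying $u_\lambda$ by a measurable unimodular function we may take $\gamma(\lambda)>0$. Applying the direct integral decomposition of $\pi$ with $\varphi$ the indicator function of $[\alpha,\beta]$ (the decomposition identity extends from $C_0(\R)$ to bounded Borel functions) gives, for $g,h\in C_c^\infty[0,\infty)$,
\[
\langle g,P_{[\alpha,\beta]}h\rangle=\int_\alpha^\beta\langle g,\Phi_\lambda\rangle\langle\Phi_\lambda,h\rangle\,d\mu(\lambda)=\int_\alpha^\beta|\gamma(\lambda)|^2\,\langle g,F_\lambda\rangle\langle F_\lambda,h\rangle\,d\mu(\lambda),
\]
so the whole theorem reduces to identifying the measure $|\gamma(\lambda)|^2\,d\mu(\lambda)$.

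The key computation is that of $\Trace(A_\lambda^*A_\lambda)=\|A_\lambda u_\lambda\|_{H_{0,\lambda}}^2$. Since $u_\lambda$ corresponds to the eigenfunction $\gamma(\lambda)F_\lambda$, linearity of $A_\lambda$ gives $A_\lambda u_\lambda=\gamma(\lambda)F_{0,\lambda}$, where $F_{0,\lambda}$ is the asymptotic eigenfunction of $F_\lambda$. Because $F_\lambda$ is real-valued (real operator, real $\lambda$, real initial data), Proposition~\ref{prop-eigenfunction-asymptotics} forces $F_{0,\lambda}(x)=c(\lambda)e^{i\sqrt\lambda x}+\overline{c(\lambda)}e^{-i\sqrt\lambda x}$ with $c(\lambda)$ exactly as characterized in the statement; this also shows $c(\lambda)$ is well defined, and assumption~\eqref{assumption-A-nonzero} shows it is nonzero. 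As $e^{\pm i\sqrt\lambda x}$ form an orthonormal basis of $H_{0,\lambda}$, one gets $\|A_\lambda u_\lambda\|^2=2|\gamma(\lambda)|^2|c(\lambda)|^2$. Since $\dim H_{0,\lambda}=2$, Theorem~\ref{thm-mu-in-terms-of-mu-zero} then gives
\[
\frac{d\mu}{d\mu_0}(\lambda)=\frac{2}{2|\gamma(\lambda)|^2|c(\lambda)|^2}=\frac{1}{|\gamma(\lambda)|^2|c(\lambda)|^2},\qquad\text{equivalently}\qquad|\gamma(\lambda)|^2\,d\mu(\lambda)=\frac{1}{|c(\lambda)|^2}\,d\mu_0(\lambda),
\]
and substituting this together with \eqref{eq-mu-0-fmla} into the displayed formula for $\langle g,P_{[\alpha,\beta]}h\rangle$ yields the theorem.

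The only place the argument is not purely mechanical — and the step I expect to be the main obstacle to state cleanly — is the bookkeeping around the undetermined scale $\gamma(\lambda)$. The direct integral decomposition of $\pi$ is far from unique and $\mu$ by itself is not canonical; what is canonical, and what actually appears in the spectral formula, is the combination $|\gamma(\lambda)|^2\,d\mu(\lambda)$. The point is that Theorem~\ref{thm-mu-in-terms-of-mu-zero} computes exactly this combination, because $\Trace(A_\lambda^*A_\lambda)$ rescales by the same factor $|\gamma(\lambda)|^2$ under a rescaling of $u_\lambda$, so it is natural to organize the proof around that invariant quantity. Beyond Section~\ref{sec-asymp-related-reps}, the only analytic ingredient is the already-established existence of the asymptotic eigenfunction in Proposition~\ref{prop-eigenfunction-asymptotics}.
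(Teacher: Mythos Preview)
Your argument is correct and is essentially the paper's own proof. The only cosmetic difference is the choice of normalization for the section of $\{H_\lambda\}$: the paper picks $v_\lambda$ so that $\varepsilon_\lambda^*(\overline{v_\lambda})=\overline{F_\lambda}$ and carries the unknown $\langle v_\lambda,v_\lambda\rangle_{H_\lambda}$ through the computation, whereas you pick a unit section $u_\lambda$ and carry the unknown scale $\gamma(\lambda)$; these are reciprocal bookkeeping choices and lead to the identical cancellation when Theorem~\ref{thm-mu-in-terms-of-mu-zero} is applied.
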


\begin{proof} 
  We shall compute $\|P_{[\alpha,\beta]} h\|^2 $ (the formula in the statement of the theorem will follow by polarization).  First, according to the definition of a direct integral decomposition,
  \[
    \|P_{[\alpha,\beta]} h\|^2 = \int _\alpha^\beta \| h_\lambda\|_{H_\lambda} ^2 \, d\mu (\lambda) .
  \]
  Now let $\{ v_\lambda\}$ be the section of $\{ H_\lambda \}$ for which $\overline{ \varepsilon^* _\lambda}({v_\lambda}) =  {F_\lambda}$, with $F_\lambda$ as in the statement of the theorem.  Then
  \[
    \int _\alpha^\beta \| h_\lambda\|_{H_\lambda} ^2 \, d\mu (\lambda) 
    = \int _\alpha^\beta \frac{ \bigl | \langle v_\lambda, h_\lambda\rangle _{H_\lambda} \bigr | ^2 } { \langle v_\lambda, v_\lambda\rangle _{H_\lambda}} \, d\mu (\lambda) 
    = \int _\alpha^\beta \frac{| \langle F_\lambda, h_\lambda\rangle_{L^2} | ^2 } { \langle v_\lambda, v_\lambda\rangle_{H_\lambda} } \, d\mu (\lambda) ,
  \]
  and applying Theorem~\ref{thm-mu-in-terms-of-mu-zero} we get
  \[
    \begin{aligned}
      \int _\alpha^\beta \| h_\lambda\|_{H_\lambda} ^2 \, d\mu (\lambda) 
      	& = \int _\alpha^\beta \frac{| \langle F_\lambda, h_\lambda\rangle_{L^2} | ^2 } { \langle v_\lambda, v_\lambda\rangle_{H_\lambda} } \,
      \frac{2 d\mu_0 (\lambda)}{\Trace(C_\lambda ^*C^{\vphantom{*}}_\lambda )}      \\
      & = 2 \int _\alpha^\beta \frac{| \langle F _\lambda, h_\lambda\rangle_{L^2} | ^2} { \langle C_\lambda v_\lambda, C_\lambda v_\lambda\rangle_{H_{0,\lambda}} } \, d\mu_0 (\lambda).
    \end{aligned}
  \]
  It follows from our definition of $C_\lambda $ that this is
  \[
    \int _\alpha^\beta \frac{ | \langle F _\lambda, h_\lambda\rangle_{L^2} | ^2  } { |c(\lambda)|^2 } \, d\mu_0 (\lambda),
  \]
  and the theorem follows from the explicit formula for $\mu_0$ in \eqref{eq-mu-0-fmla}.
\end{proof}

There is a  similar theorem for the operator in Example~\ref{ex-sl2-casimir}.  The only change is that $F_\lambda$  is taken to be the $\lambda$-eigenfunction of $D$ on $(0,\infty)$ corresponding to the $K$-equivariant $\lambda$ eigenfunction 
$G/K{ \to} \C$  of the shifted Laplace-Beltrami operator with value $1$ at $eK$.

\section{Non-Positive Spectrum}
\label{sec-non-positive}

In this section we shall look at  the non-positive part of the spectrum of a Sturm-Liouville operator $D$ of the types considered in the previous section.  The methods of this paper really have nothing to contribute here, and for that reason  we shall be extremely brief.

The value $\lambda{=}0$ belongs to the spectrum of $D$ of any of the operators from Section~\ref{sec-introduction} because the spectrum is closed.  But for the purposes of fully determining the measure $\mu$ we need to determine whether or not $0$ is an eigenvalue of the self-adjoint operator $D$, or in other words whether or not $\mu(\{0\}) > 0$.

The answer is that $\lambda{=}0$ is not an eigenvalue.  For the Sturm-Liouville differential operators from Section~\ref{sec-introduction}, the $\lambda{=}0$ eigenfunctions have the form
\[
F_0(x) = c_1  + c_2 x \qquad (x\gg0) ,
\]
and the only possibility for a \emph{square-integrable} eigenfunction is $c_1{=}c_2{=}0$, in which case $F_0$ is identically zero.  But any eigenfunction for the self-adjoint operator $D$ would in particular be a square-integrable eigenfunction for the differential operator $D$.

 For the Laplace-Beltrami operator from Example~\ref{ex-sl2-casimir} one can employ a similar argument, using a version of  Proposition~\ref{prop-eigenfunction-asymptotics} in place of the simple asymptotic formula for $F_0$ given above.

The negative part of the spectrum for the operators that we discussed in Section~\ref{sec-introduction} needs to be handled differently, since   square-integrable eigenfunctions are certainly possible in this case.  But one can prove, using the same methods that go into the proof of  Proposition~\ref{prop-eigenfunction-asymptotics},  that: 

\begin{proposition}
\label{prop-few-square-integrable-eigenfunctions} If we assume that
  \[
    \int_{1}^\infty e^{\alpha x} |1- {p(x)^{-1}}| \, d x< \infty \quad \text{and} \quad \int_{1}^\infty e^{\alpha x} |q(x)| \, d x< \infty.
  \]
  for some $\alpha > 0$, then the operator $D$ has at most finitely many $L^2$-eigenfunctions satisfying the boundary condition $F_\lambda (0) = 0$. \qed
\end{proposition}

One can say more using perturbation theory.  The operators $D$ from Section~\ref{sec-introduction}  are    relatively compact perturbations of the positive operators
\[
  -d/dx\cdot p(x)\cdot d/dx .
\]
So the negative parts of their spectra consist of  at most countably many eigenvalues, accumulating only at $0$.  Compare \cite[Chapter IV, Theorem 5.35]{Kato76}. But Proposition~\ref{prop-few-square-integrable-eigenfunctions} rules out the possibility of accumulation at $0$. Hence the negative spectra are finite in this case.

As  for the Laplace-Beltrami operator from Example~\ref{ex-sl2-casimir}, it is not difficult to show that $D\ge 0$, so there is no negative spectrum at all.

\section*{Appendix: Review of Kodaira's Approach}
\label{sec-kodaira-revised}

In this appendix we shall   review Weyl's approach to Theorem~\ref{thm-weyl} , as  improved by Kodaira \cite{Kodaira49} (see also \cite{Weyl50}).  Our aim in doing so is to indicate how different it is from the approach taken in the body of this paper. 

Let $D$ be a self-adjoint Hilbert space operator.  If $\alpha{ <} \beta$, and if both $\alpha$ and $\beta$ are absent from the spectrum of $D$, then according to the Riesz functional calculus, the spectral projection for $D$  associated to the interval $(\alpha,\beta)$ is
\begin{equation}
  \label{eq-projection1.5}
  P_{(\alpha,\beta)}   =  \frac{1}{2 \pi i }  \int _{\Gamma} (\nu - D)^{-1} \, d\nu  ,
\end{equation}
where the contour $\Gamma$ is indicated in Figure~\ref{fig:contour}.
\begin{figure}[ht] 
  \centering \includegraphics[scale=0.2]{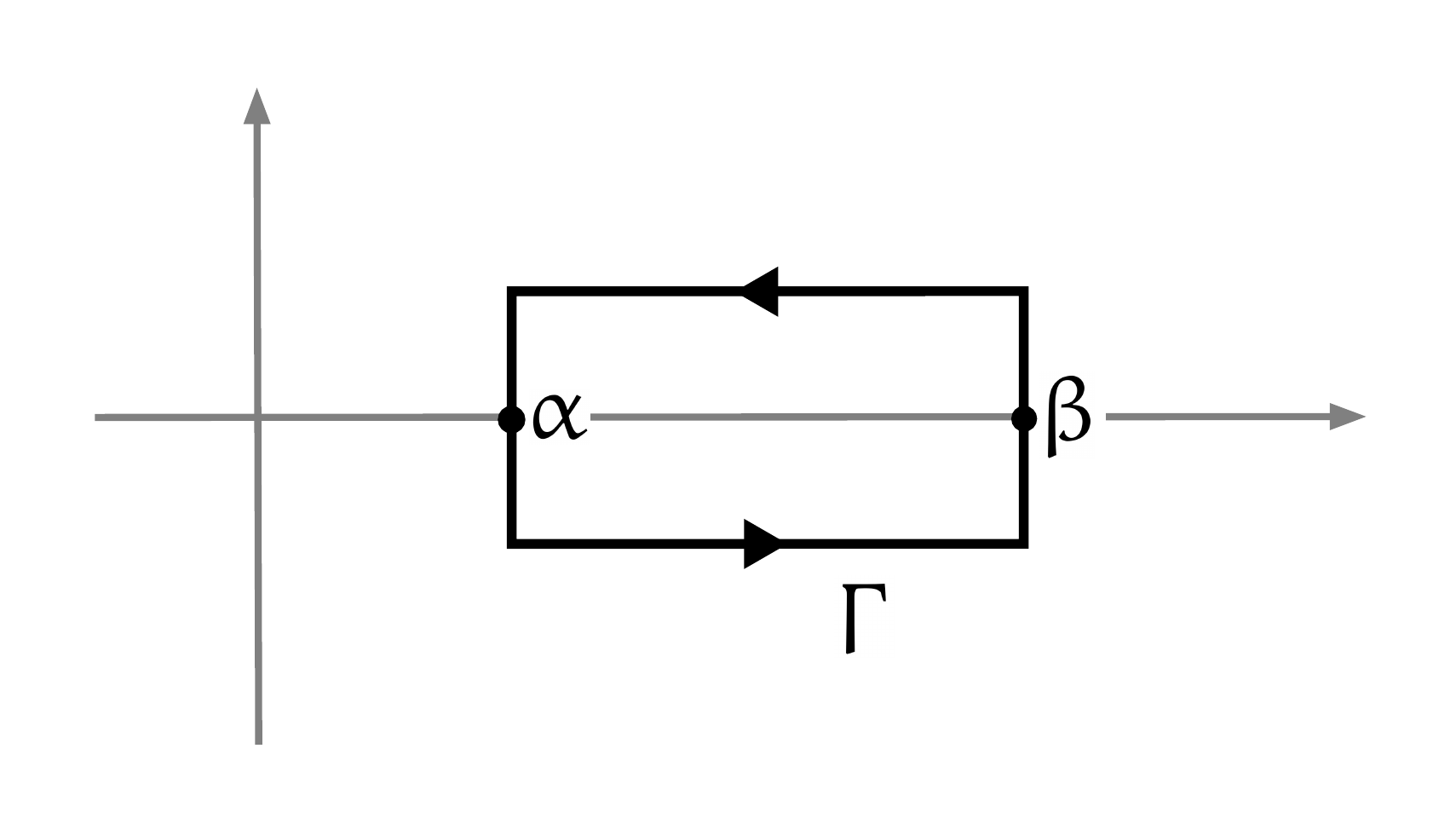}
  \caption{The contour for the integral in \eqref{eq-projection1.5}}
  \label{fig:contour}
\end{figure}
The contributions to the integral in \eqref{eq-projection1.5} from the vertical components of the contour $\Gamma$ decrease to zero in norm as the height of the contour decreases to zero, and so
\begin{equation}
  \label{eq-projection2}
  P_{(\alpha,\beta)}  = \lim_{\varepsilon \to 0+} \frac{1}{2 \pi i }
  \left (   \int _{\alpha - i\varepsilon}^{\beta-i\varepsilon} (\nu - D)^{-1} \, d\nu  
    -  \int _{\alpha + i\varepsilon}^{\beta+i\varepsilon} (\nu - D)^{-1} \, d\nu  \right ) ,
\end{equation}
or equivalently
\begin{equation}
  \label{eq-projection3}
  P_{(\alpha,\beta)} 
  = \lim_{\varepsilon \to 0+} \frac{1}{2 \pi i }
  \int _{\alpha }^{\beta } 
  \, (D {-} \lambda {-} i\varepsilon )^{-1}   -  (D {-} \lambda  {+} i\varepsilon )^{-1}  
  \, d\lambda 
\end{equation}
(these are norm limits).  The integrand on the right-hand side of \eqref{eq-projection3} is uniformly bounded in $\varepsilon{>}0$ and in $\lambda \in \R$, and by approximating a general self-adjoint operator $D$ by operators that do not contain $\alpha$ or $\beta$ in their spectrum, we find that:

\begin{lemma}[Kodaira]
\label{lem-kodaira-lemma}
  The formula \eqref{eq-projection3} holds for \emph{any} self-adjoint operator $D$ and any interval $[\alpha,\beta]$, as long as as $\alpha$ and $\beta$ do not belong to the point spectrum of $D$ \textup{(}the limit in \eqref{eq-projection3} is now a strong limit of a uniformly bounded family of operators\textup{)}. \qed
\end{lemma}

 The formula \eqref{eq-projection3} is of particular value when $D$ is a Sturm-Liouville operator because, as we shall see, the resolvent operators $(D{-}\lambda{\pm}i\varepsilon)^{-1}$ may be computed quite explicitly.  
Let us consider  then
\begin{equation*}
  D = - \frac{d\,\,}{dx} \cdot p(x) \cdot \frac{d\,\,} {dx} + q(x) 
\end{equation*}
where  $p$ and $q$ are smooth, real-valued functions on $(0,\infty)$ (we shall impose further conditions on $p$ and $q$ later on).  Assume  that $D$ defines a self-adjoint operator on $L^2 (0,\infty)$ on a given domain that (i) includes the smooth, compactly supported functions on $(0,\infty)$ and (ii) is invariant under multiplication by smooth functions on $(0,\infty)$ that are locally constant outside of a compact set.

If $\nu  \notin  \R$ (or more generally if $\nu$ belongs to the resolvent set of $D$), then there exist nonzero $\nu$-eigenfunctions $F_\nu$ and $G_\nu$ that vary smoothly with $\nu$, the first of which agrees with an element in $\dom(D)$ near $0$ and the second of which agrees with an element of $\dom (D)$ near $\infty$.  Indeed, if $h$ is any smooth and compactly supported function on $(0,\infty)$, then the function
\[
  f = (D-\nu)^{-1}h
\]
belongs to $\dom (D)$, and moreovoer
\[
  D f = \nu f + h .
\]
It follows that $Df {=} \nu f$ near $0$ and near $\infty$.  Because the set of all $ (D{-}\nu)^{-1}h $ is dense in $L^2 (0,\infty)$, we obtain, for at least some $h$, functions $f$ that are  \emph{nonzero}  near $0$ and near $\infty$. They agree there with functions in $\dom (D)$, and they extend to nonzero $\nu$-eigenfunctions $F_\nu$ and $G_\nu$ on $(0,\infty)$, as required.  

Note that  eigenfunctions $F_\nu $ and $G_\nu$ must be linearly independent, for otherwise they would belong to $\dom (D)$, which is impossible if $\nu \notin \Spec (D)$. 

 Consider now the integral kernel defined by
\begin{equation}
\label{eq-def-of-k-nu}
  k _\nu(x,y) =
  \begin{cases}
    F _\nu(y) G _\nu  (x)  &  x \ge y \\
    F _\nu(x)G _\nu(y) & x \le y .
  \end{cases}
\end{equation}
The associated integral operator $K_\nu$ can certainly be defined on the domain of smooth, compactly supported functions $h$ on $(0,\infty)$, and moreover since
\[
  (K_\nu h)(x) = F_\nu(x) \int _x ^\infty G _\nu(y) h (y)\, dy + G _\nu(x) \int_0^x F _\nu(y) h(y) \, dy .
\]
the range consists of smooth functions in $\dom (D)$.  We compute directly that
\begin{equation}
\label{eq-greens-function-fmla1}
(  D -\nu)K_\nu h   = \Wr(F_\nu,G_\nu) h  ,
\end{equation}
where $\Wr(F_\nu,G_\nu)$ is the  {Wronskian}
\begin{equation*}
  \Wr (F_\nu , G_\nu )(x) =  p (x) \bigl (F'_\nu(x)  G_\nu(x)  - F_\nu(x)  G'_\nu(x) \bigr) .
\end{equation*}
As is well known, this  is a \emph{constant} function of $x{\in} (0,\infty)$; moreover the constant value determines    a   nondegenerate bilinear form on the $2$-dimensional space of $\nu$-eigenfunctions. Since $F_\nu$ and $G_\nu$ are linearly independent, we can therefore normalize them so that 
\begin{equation}
\label{eq-wronskian-normalization1}
\Wr(F_\nu,G_\nu)=1,
\end{equation}
in which case it follows from \eqref{eq-greens-function-fmla1} that 
\begin{equation}
\label{eq-greens-function-fmla2}
 (D {-} \nu)^{-1} h = K_\nu h 
\end{equation}
for all smooth and compactly supported functions $h$ on $(0,\infty)$.

In order to apply \eqref{eq-greens-function-fmla2} to the limit formula \eqref{eq-projection3} we shall make the following additional assumptions concerning the eigenfunctions $G_\nu$:

\begin{enumerate}

  \item[(G1)] For all $\lambda > 0$ the limits
  \[
    G_{\lambda}^{+} = \lim_{\varepsilon \searrow 0} G_{\lambda+ i\varepsilon} \quad \text{and} \quad G_{\lambda}^{-}= \lim_{\varepsilon \searrow 0} G_{\lambda- i\varepsilon}
  \]
  exist in the usual $C^1$-topology (uniform convergence of the functions and their derivatives on compact sets of $(0,\infty)$; note that, using $D$, this implies convergence in the $C^2$-topology, and indeed in the $C^\infty$-topology).  Moreover the convergence is uniform over compact sets of positive $\lambda$.

  \item[(G2)] For all $\lambda > 0$ the functions $G^+_\lambda$ and $G^-_\lambda$ are linearly independent.
\end{enumerate}
The limit functions $G_\lambda ^\pm$ obey the relation
\begin{equation}
\label{eq-g-plus-minus-relation}
  \Wr(G^+_\lambda, G^- _\lambda) \cdot F_\lambda^{\phantom{+}} = G_\lambda^+ - G_\lambda^-
\end{equation}
for $\lambda >0$.  Indeed $G_\lambda^+$ and $G_\lambda ^-$ are $\lambda$-eigenfunctions for the differential operator $D$, and   as a result, the Wronskian $\Wr(G^+_\lambda, G^-_\lambda)$ is a constant function, so if we write
\[
  H_\lambda ^{\phantom{+}} = \Wr(G^+_\lambda, G^- _\lambda) \cdot F_\lambda^{\phantom{+}},
\]
then the three functions $ H_\lambda^{\phantom{+}}$, $G_\lambda^+$ and $G_\lambda^-$ all belong to the two-dimensional space of $\lambda$-eigenfunctions for $D$.  To verify that $H_\lambda^{\phantom{+}}= G^+_\lambda {-} G^-_\lambda$ we therefore just need to observe that
\[
  \Wr (H_\lambda^{\phantom{+}}, G_\lambda^{\pm}) = \Wr(G^+_\lambda {-} G^-_\lambda, G^\pm_\lambda),
\]
which is a consequence of \eqref{eq-wronskian-normalization1}. 
\begin{theorem}
  \label{thm-kodaira-2}
  If $ 0{<}\alpha{<}\beta$, then under the assumptions \textup{(G1)} and \textup{(G2)} above, the spectral projection $P_{(\alpha,\beta)}$ for $D$ is given by the formula
  \[
    (P_{(\alpha,\beta)} h )(x) = \int_0^\infty p_{(\alpha,\beta)}(x,y) h(y)\, dy ,
  \]
  for all smooth and compactly supported functions $h$ on $(0,\infty)$, where
  \begin{equation*}
    p_{(\alpha,\beta)}(x,y)
    = \frac{1}{2\pi i}  \int_\alpha^\beta 
    F_\lambda(x)F_\lambda (y)  
    \Wr (G_\lambda ^+, G_\lambda ^-) \,   {d\lambda} .
  \end{equation*}
\end{theorem}

\begin{proof}
It follows from Lemma~\ref{lem-kodaira-lemma},\eqref{eq-greens-function-fmla2} that 
\[
( P_{(\alpha,\beta)} h )(x) = \lim_{\varepsilon\searrow 0}\frac {1}{2 \pi i} 
 \int _\alpha^\beta  \Bigl ( \int_0^\infty \bigl( k_{\lambda+i\varepsilon}(x,y) {-} k_{\lambda -i\varepsilon} (x,y)\bigr ) h(y)\, dy \Bigr ) \, d\lambda 
\]
and from  \eqref{eq-def-of-k-nu}, together with the assumption (G1)  and  \eqref{eq-g-plus-minus-relation}, that 
\[
\lim_{\varepsilon\searrow 0} \bigl( k_{\lambda+i\varepsilon}(x,y) {-} k_{\lambda -i\varepsilon} (x,y)\bigr )   =
\Wr(G^{+}_\lambda, G^-_{\lambda})F_\lambda(x)F_\lambda (y).
\]
The convergence is uniform over compact subsets of $y\in (0,\infty)$ and compact subsets of $\lambda \in (0, \infty)$.  Hence
\[
( P_{(\alpha,\beta)} h )(x) = \frac {1}{2 \pi i}  \int_0^\infty  \Bigl(  \int _\alpha^\beta
\Wr(G^{+}_\lambda, G^-_{\lambda})F_\lambda(x)F_\lambda (y) \,d\lambda\Bigr ) \, h(y)\, dy
\]
as required.
\end{proof}

At this point we finally turn to Sturm-Liouville operators with eventually constant  coefficient functions, as in Section~\ref{sec-introduction}. If  we write 
\[
F_\nu (x) = c(\nu) \exp({i \sqrt{\nu}x}) + c(-\nu) \exp({-i \sqrt{\nu}x}) \qquad (x \gg 0)
\]
using the usual principal branch of the square root function, equal to the positive square root on the positive axis (we shall avoid the eigenvalues $\nu <0$), then using \eqref{eq-wronskian-normalization1} we compute that for $\lambda >0$ and $\nu  = \lambda {\pm} i\varepsilon$, 
\begin{equation}
\label{eq-fmla-for-g-nu}
G_{\nu  } (x) = \frac{i}{2c(\mp \nu  )  \sqrt{\nu  } } \; \exp(\pm {i\sqrt{{\nu }}x}) \qquad (x \gg 0) .
\end{equation}
The sign in the exponential is needed to ensure that $G_\nu$ is an $L^2$-function at  infinity, which is of course necessary if $G_\nu$ is to agree with a function in $\dom(D)$ at infinity.  It follows easily from \eqref{eq-fmla-for-g-nu} that (G1) and (G2) are satisfied, that 
\begin{equation*}
G^\pm _\lambda (x)  = \frac{\pm i}{2 c(\mp\lambda   )  \sqrt{\lambda   } } \; \exp(\pm {i\sqrt{{\lambda }}x}) \qquad (x \gg 0) ,
\end{equation*}
and that
\[
\Wr (G^+_\lambda , G_\lambda ^-) = \frac{i} {2 | c(\lambda)|^2 \sqrt{\lambda} } .
\]
Therefore Theorem~\ref{thm-kodaira-2} gives
\begin{equation*}
  \label{eq-projection4}
    p_{(\alpha,\beta)}  (x,y)
   = \frac{1}{4 \pi } \int _{\alpha }^{\beta } F_\lambda (x) F_\lambda (y) \frac{1}{ | c(\lambda)|^2} \frac{d\lambda}{ \sqrt{\lambda}} .
\end{equation*}
This is a reformulation of Weyl's theorem, as stated in Section~1.

\bibliography{Refs} \bibliographystyle{alpha}

\noindent {\small Department of Mathematics, Penn State University, University Park, PA 16802.}

\smallskip

\noindent{\small Email: higson@math.psu.edu and qut101@psu.edu.}

\end{document}